\def\C{\mathbb{C}}
\def\Z{\mathbb{Z}}
\newtheorem{theo}{Theorem}[section]
\newtheorem{lemm}[theo]{Lemma}
\newtheorem{remark}[theo]{Remark}
\newtheorem{coro}[theo]{Corollary}
\newtheorem{prop}[theo]{Proposition}
\numberwithin{equation}{section}
\begin{document}
\title{ Representations of Toroidal and Full toroidal Lie algebras over polynomial algebras}
 	
\author[S. Tantubay ]{Santanu Tantubay }
\address{S. Tantubay: Department of Mathematics, The Institute of Mathematical Sciences, A CI of Homi Bhabha National Institute, IV Cross Road, CIT Campus
Taramani
Chennai 600 113
Tamil Nadu, India}
\email{santanut@imsc.res.in, tantubaysantanu@gmail.com}
\author[P. Chakraborty]{Priyanshu Chakraborty}
\address{Priyanshu Chakraborty:School of Mathematical Sciences, Ministry of Education Key Laboratory of Mathematics and Engineering Applications and Shanghai Key Laboratory of PMMP,
		East China Normal University, No. 500 Dongchuan Rd., Shanghai 200241, China.}
\email{priyanshu@math.ecnu.edu.cn, priyanshuc437@gmail.com}

\thanks{$^\star$  Corresponding author}

\subjclass[2020]{17B65; 17B66, 17B68}

\keywords{ Kac-Moody Algebras, Toroidal Lie algebras, Full toroidal Lie algebras}
\date{}

\maketitle
\begin{abstract}
Toroidal Lie algebras are $n$ variable generalizations of affine Kac-Moody Lie algebras. Full toroidal Lie algebra is the semidirect product of derived Lie algebra of toroidal Lie algebra and Witt algebra, also it can be thought of $n$-variable generalization of Affine-Virasoro algebras. Let $\tilde{\mathfrak{h}}$ be a Cartan subalgebra of a toroidal Lie algebra as well as full toroidal Lie algebra without containing the zero-degree central elements. In this paper, we classify the module structure on $U(\tilde{\mathfrak{h}})$ for all toroidal Lie algebras as well as full toroidal Lie algebras which are free $U(\tilde{\mathfrak{h}})$-modules of rank 1. These modules exist only for type $A_l (l\geq 1)$, $C_l (l\geq2)$ toroidal Lie algebras and the same is true for full toroidal Lie algebras. Also, we determined the irreducibility condition for these classes of modules for both the Lie algebras. \\

\end{abstract}

\section{Introduction}
In order to understand the module category of any algebras, one of the initial steps is to classify its irreducible objects. Lie algebra is a fundamental subject in mathematics. Lie algebra theory also plays an important role in physics. The classification of simple modules over finite dimensional simple complex Lie algebra seems wild; for $\mathfrak{sl}_2$ there is a weak version of such classification (see \cite{RB}, \cite{VM} ). Some other classes of simple modules for simple finite-dimensional Lie algebras are well studied. For example, simple weight modules with finite-dimensional weight spaces are classified for simple finite-dimensional Lie algebras, there are two types of such modules: parabolically induced modules and cuspidal module. Parabolically induced modules include simple finite-dimensional modules (\cite{EC}, \cite{JD}) and more generally highest-weight modules (\cite{JD}, \cite{JH}, \cite{BGG}). Simple cuspidal modules were classified by O. Matheieu (see \cite{OM}). Other well-studied classes of simple modules for finite-dimensional simple Lie algebras are Whittaker modules (\cite{BK}) and Gelfan-Zetlin modules (\cite{DOF}). \\
Affine Kac-Moody Lie algebras are natural generalizations of finite-dimensional simple Lie algebras. The structure theory and representation theory of affine Kac-Moody algebra has been used in different areas of mathematics (\cite{VK}, \cite{KR}, \cite{LL}) as well as in physics (\cite{BP}). Integrable weight representation theory is well-studied for affine Kac-Moody Lie algebras (See \cite{VK1}). Simple integrable modules with finite-dimensional weight spaces have been classified in \cite{VC} and \cite{CP}. Verma-type modules for this Lie algebras were first studied by Jakobsen-Kac (\cite{JK}) and then by Futorny (\cite{VF}, \cite{VF1}). The classification of all non-zero level simple weight modules with finite-dimensional weight spaces over Kac-Moody Lie algebras was obtained in \cite{FT} and for level zero case it was announced in \cite{DG}. There are some other classes of weight modules for affine Kac-Moody Lie algebras with infinite-dimensional weight spaces(see \cite{CP1}, \cite{BBFK}, \cite{FK}, \cite{MZ}) .\\
Non-weight simple modules for any Lie algebras are less understood. A class of non-weight modules for finite-dimensional simple Lie algebra of type $A_l$ is studied by J. Nilsson in \cite{N1}, which are known as Cartan-free modules. In \cite{N2}, the author showed that Cartan-free modules exist only for type $A_l(l\geq 1)$ and type $C_l(l\geq 2)$.  For affine Kac-Moody Lie algebras rank one Cartan free modules are classified in \cite{CTZ1}. For Witt algebra such classification was done in \cite{TZ}. For other classes of Lie algebras, classification of Cartan free modules one can see \cite{CH}, \cite{JZ}, \cite{XZ},.
This paper aims to study non-weight modules for two classes of Lie algebras namely toroidal Lie algebras and full toroidal Lie algebras. For integrable weight representations of toroidal Lie algebra and full toroidal Lie algebra, one can see \cite{ER}, \cite{RJ} and reference therein.\\
This paper is organized as follows: In Section \ref{NP}, we recall the definitions of Kac-Moody Lie algebra, toroidal Lie algebra, and full toroidal Lie algebra. Then we defined a full subcategory of the category of modules corresponding to a finite-dimensional commutative subalgebra of a Lie algebra. In Section \ref{sec: Aux Lem}, we recall some auxiliary lemmas related to finite-dimensional simple Lie algebras. In Section \ref{TA}, we classified all Cartan-free modules of rank one over toroidal Lie algebras and we determined the condition of simplicity for these modules. In Section \ref{FTA}, we classified all Cartan-free modules of rank one over full toroidal Lie algebras and we determined the condition of simplicity for these modules.

\section{Notations and Preliminaries}\label{NP}
Throughout this paper, $\Z$, $\C$, and $\C^*$ denote the sets of integers, complex numbers, and nonzero complex numbers, respectively.  For a Lie algebra $\mathfrak{a}$, we denote the universal enveloping algebra of $\mathfrak{a}$ as $\mathcal{U}(\mathfrak{a})$. All the vector spaces, algebras, and tensor products are over $\C$, unless it is specified.

In this section, we shall introduce the notions of affine Kac-Moody algebras, toroidal Lie algebra and full toroidal Lie algebra.  
\subsection{}
Suppose $\mathfrak{g}=\mathfrak{g}(X_l)$ is a  finite-dimensional simple Lie algebra with a fixed Cartan subalgebra $\mathfrak{h}$ of type $X_l$, where $X$ is one of the types $A,B,\cdots G$ . Let $\{x_i, y_i, h_i: 1\leq i\leq l\}$ be a set of Chevalley generators for $\mathfrak{g}$, where $l=\hbox{dim} (\mathfrak{h})$. For type $\mathfrak{g}=\mathfrak{g}(A_l)$ or type $\mathfrak{g}=\mathfrak{g}(C_l)$, we take a basis $\{H_i:1\leq i\leq l\}$ of $\mathfrak{h}$ by 
$$[H_i, x_j ]=\delta_{ij}x_i,\; [H_i, y_j]=-\delta_{ij}y_j,\; \forall \; 1\leq i,j\leq l.$$

\subsection{Affine Kac-Moody algebras.}\label{Aff}
Let $\mathfrak{g}$ be a simple finite-dimensional Lie algebra. Let us fix a Cartan subalgebra $\mathfrak{h}$ of $\mathfrak{g}$. Suppose that $\Delta$ is the set of roots of $\mathfrak{g}$ and $\Delta_+$ is the set of positive roots of $\mathfrak{g}$. Then $\mathfrak{g}=\mathfrak{h}\oplus \displaystyle{\bigoplus_{\alpha\in \Delta}} \mathfrak{g}_\alpha $ is the root space decomposition of $\mathfrak{g}$ with respect to $\mathfrak{h}$. Assume that $(-,-)$ is the Killing form on $\mathfrak{g}$.

Let $\C[t^{\pm1}]$ be the Laurent polynomial ring over $\C$. Then the algebra $ \widehat{\mathfrak{g}}$ is a central extension of loop algebra $\mathfrak{g}\otimes \C [t^{\pm1}]$, i.e. $\widehat{\mathfrak{g}}=\mathfrak{g}\otimes \C [t^{\pm1 }]\oplus \C {\bf{k}}$ with the Lie brackets
\begin{equation}\label{affLA}
[x(m),y(n)]=[x,y](m+n)+m(x,y)\delta_{m+n,0}{\bf{k}}, \quad [\widehat{\mathfrak{g}},{\bf{k}}]=0,
\end{equation}
for all $x,y\in \mathfrak{g}, m,n\in \Z$, where $x(m)=x\otimes t^m$.

Moreover, we can define the affine Kac-Moody algebra  $\widetilde{\mathfrak{g}}:= \widehat{\mathfrak{g}} \oplus \C d = \mathfrak{g}\otimes \C [t^{\pm1 }]\oplus \C {\bf{k}} \oplus \C d$, with above Lie brackets and  $$[d,x(m)]= mx(m).$$
Let $\widetilde{\mathfrak{h}}=\mathfrak{h}\oplus \C d$ be a Cartan subalgebra of Kac-Moody Lie algebra $\widetilde{\mathfrak{g}}$.
\subsection{Toroidal Lie algebras}
Let $A=\C[t_1^{\pm1}, \cdots, t_n^{\pm1}]$ be the Laurent polynomial ring in $n$ commuting variables $t_1, \cdots, t_n$ over $\C$. Assume $\Omega_A$ is the free $A$-module with basis ${K_1,\dots,K_n}$ and $dA$ is the subspace spanned by the elements $\sum_{i=1}^nm_it^mK_i,\; m\in \Z ^n$. 
Then $L(\mathfrak{g})=\mathfrak{g} \otimes A$ with the brackets $[x\otimes t^r, y\otimes t^s]=[x,y] \otimes t^{r+s}$ is known as the loop algebra of $\mathfrak{g}$ by $A$. Define $\mathcal{K}_A=\Omega_A/dA$, then it is well known that (See \cite{RMY}, \cite{K}) $\widetilde{L(\mathfrak{g})}=L(\mathfrak{g}) \oplus \mathcal{K}_A$ is the universal central extension of $L(\mathfrak{g})$ with the Lie brackets
\begin{equation}\label{tla}
    [x(a), y(b)]=[x,y](a+b)+ (x,y)\sum_{i=1}^na_1t^{a+b}K_i, \; [L(\mathfrak{g}), \mathcal{K}_A]=0
\end{equation}

Let $D=\oplus_{i=1}^n \C d_i$ be a $n$ dimensional vectorspace. We define the toroidal Lie algebra $\widetilde{\mathfrak{g}}_n=\widetilde{L(\mathfrak{g})} \oplus D $ with the Lie brackets (\ref{tla}) and $[d_i,x(a)]=a_ix(a),\; [d_i,d_j]=0$, where $x(a) \in \widetilde{L(\mathfrak{g})}$ and $1\leq i,j\leq n$. Then it is easy to see that $\widetilde{\mathfrak{g}}_1=\widetilde{\mathfrak{g}}$.
We define a Cartan subalgebra of toroidal Lie algebra by $\widetilde{\mathfrak{h}}_n=\mathfrak{h}\oplus D$.

\subsection{Full toroidal Lie algebras (\cite{RJ})}
Let $Der(A)$ be the space of all derivations on $A$. It has a basis consisting of elements of the form $\{t^rd_i: r\in \Z^n,\; 1\leq i\leq n\}$. Now define $D(u,r)=\sum_{i=1}^nu_it^rd_i$, where $u\in \C^n$ and $r\in \Z^n$. We see that $Der(A)$ is a Lie algebra, with the Lie brackets $[D(u,r), D(v,s)]=D(w,r+s)$, where $w=(u,s)v-(v,r)u$, $u,v\in \C^n,\; r,s\in \Z^n$. This Lie algebra is known as Witt algebra. Every derivation $d$ of $A$ can be naturally extended to a derivation on the tensor product $\mathfrak{g}\otimes A$ by $d(x\otimes f)=x\otimes df$, where $x\in \mathfrak{g}$, $f\in A$ and $d$ has a unique natural extension to $\mathcal{K}_A$ by 
\[t^rd_i(t^sK_j)=s_it^{(r+s)}K_j+\delta_{ij}\sum_{p=1}^nr_pt^{r+s}K_p. \]
It is well known that $Der(A)$ admits two non-trivial $2$-cocyles with values in $\mathcal{K}_A$ (See \cite{BB}):
\[\phi_1(t^rd_i,t^sd_j)=-s_ir_j\sum_{p=1}^nr_pt^{r+s}K_p,\]
\[\phi_2(t^rd_i,t^sd_j)=r_is_j\sum_{p=1}^nr_pt^{r+s}K_p.\]
Let $\phi$ be an arbitrary Linear combination of $\phi_1$ and $\phi_2$. Then there is a corresponding Lie algebra
\[\mathcal{L}=\mathfrak{g}\otimes A\oplus \mathcal{K}_A\oplus Der(A)\]
with the Lie brackets \ref{tla} and the following:
\[[t^rd_i,t^sK_j]=s_it^{r+s}K_j+\delta_{ij}\sum_{p=1}^nr_pt^{r+s}K_p\]
\[[D(u,r),D(v,s)]=D(w,r+s)+\phi(D(u,r),D(w,s))\]
\[[D(u,r), x(s)]=(u,s)x(r+s).\]
This lie algebra is known as full toroidal Lie algebra.
\subsection{} 
Let $\mathfrak{a}$ be any Lie algebra over $\C$ and $\mathfrak{b}$ be any finite-dimensional abelian subalgebra of $\mathfrak{a}$. Denote $\mathcal{M}(\mathfrak{a},\mathfrak{b})$ by the full subcategory of $U(\mathfrak{a})$-modules consisting of objects whose restriction to $U(\mathfrak{b})$ is a free module of rank $1$, i.e.,
$$\mathcal{M}(\mathfrak{a}, \mathfrak{b})=\{M \in U(\mathfrak{a})-\hbox{Mod}| \hbox{Res}_{U(\mathfrak{b})}^{U(\mathfrak{a})}M \cong_{U(\mathfrak{b})}  U(\mathfrak{b})\}.$$

\section{Some Auxiliary Lemmas}\label{sec: Aux Lem}
In this section, we will recall several known results which are essential for our paper.
\begin{lemm}\label{fdss}{(\cite{N2}, Cor.11)}
Let $\mathfrak{g}$ be a finite-dimensional simple Lie algebra with a Cartan subalgebra $\mathfrak{h}$. Then $\mathcal{M}(\mathfrak{g}, \mathfrak{h})\neq \varnothing$ if and only if $\mathfrak{g}$ is of type $A_l(l\geq 1)$ or $C_l(l\geq 2)$.
\end{lemm}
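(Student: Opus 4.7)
The plan is to translate the existence of a module $M \in \mathcal{M}(\mathfrak{g},\mathfrak{h})$ into the solvability of an explicit system of polynomial functional equations indexed by the root system $\Delta$, and then to rule out all Dynkin types outside $A_l$ and $C_l$ by case analysis. First I would fix a $U(\mathfrak{h})$-generator $v \in M$ and identify $M$ with $R := \C[H_1,\dots,H_l]$. For each root $\alpha \in \Delta$ fix a Chevalley root vector $e_\alpha$ and put $f_\alpha := e_\alpha \cdot v \in R$. The relation $[H_i, e_\alpha] = \alpha(H_i) e_\alpha$ propagates by induction on $\deg p$ to the shift formula
\begin{equation*}
e_\alpha \cdot \bigl(p(H) v\bigr) \;=\; p(H - \alpha)\, f_\alpha(H)\, v \qquad (p \in R),
\end{equation*}
where $H - \alpha$ abbreviates $H_i \mapsto H_i - \alpha(H_i)$. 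Hence the module structure is encoded entirely by the tuple $(f_\alpha)_{\alpha\in\Delta}$.

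Next I would impose the bracket relations of $\mathfrak{g}$ on $v$. The identity $[e_\alpha, e_{-\alpha}] \cdot v = h_\alpha \cdot v$ becomes
\begin{equation*}
f_{-\alpha}(H - \alpha)\, f_\alpha(H) \;-\; f_\alpha(H + \alpha)\, f_{-\alpha}(H) \;=\; h_\alpha(H),
\end{equation*}
while for non-proportional $\alpha,\beta$ the relation $[e_\alpha, e_\beta] = N_{\alpha,\beta}\, e_{\alpha+\beta}$ (with $N_{\alpha,\beta} = 0$ when $\alpha+\beta \notin \Delta$) yields an analogous shifted-product identity coupling $f_\alpha$, $f_\beta$, and $f_{\alpha+\beta}$. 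In the first identity the top-degree homogeneous terms cancel automatically, but then the next layer of the shift expansion forces a strong degree bound: generically $\deg f_\alpha + \deg f_{-\alpha} \le 1$, so up to a swap one of $f_\alpha, f_{-\alpha}$ is constant and the other affine-linear in $H$.

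For the existence direction I would exhibit explicit solutions in types $A_l$ and $C_l$. In type $A_l = \mathfrak{sl}_{l+1}$ this is Nilsson's family: let $M = R$ and define the action of each $e_{\epsilon_i - \epsilon_j}$ by shift-multiplication against an $f_\alpha$ taken to be a linear combination of $H_1,\dots,H_l$ plus a scalar, modelled on the natural $\mathfrak{sl}_{l+1}$-action on $\C[x_1,\dots,x_{l+1}]$ restricted to an $\mathfrak{h}$-invariant polynomial section; the functional equations then reduce to elementary polynomial identities. In type $C_l$ one runs an analogous construction using the polynomial realization attached to the standard symplectic module. In both cases one verifies directly that the resulting $M$ is free of rank one over $U(\mathfrak{h})$.

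For the non-existence direction in the remaining types $B_l$, $D_l\,(l \geq 4)$, $E_6$, $E_7$, $E_8$, $F_4$, $G_2$, I would combine the degree bound above with the shifted-product identities applied to a carefully chosen small subsystem of roots — a long/short pair in the non-simply-laced cases, or a suitable triple generating an $A_2$-subsystem orthogonal to some root in the exceptional simply-laced cases — to derive a contradiction: typically the constraints force $f_\alpha$ to be divisible by a nontrivial polynomial that the first identity excludes, or they force some $h_\alpha$ to vanish, contradicting $h_\alpha \neq 0$. The main obstacle is precisely this final case analysis: the shift-operator framework is uniform and the existence constructions in types $A$ and $C$ are natural, but the non-existence statement requires type-specific exploitation of the geometric rigidity of each excluded root system, in particular of long roots or Serre triples that over-determine the degree-at-most-one ansatz.
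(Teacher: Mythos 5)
Note first that the paper does not prove this lemma at all: it is quoted verbatim from \cite{N2} (Cor.~11), where the argument is not a direct analysis of functional equations but proceeds by showing that a finite-rank $U(\mathfrak{h})$-free module gives rise to an infinite-dimensional coherent family, and then invoking the Fernando--Mathieu classification (\cite{OM}) which says that cuspidal (bounded) simple modules exist only in types $A_l$ and $C_l$; existence in those types is handled by the explicit constructions of \cite{N1} and the symplectic (Weyl-algebra) realization. So your route — encoding the module by the polynomials $f_\alpha=e_\alpha\cdot v$ and attacking the resulting shifted-product identities type by type — is genuinely different from the cited proof, and your setup (the shift formula and the identity
\begin{equation*}
f_{-\alpha}(H-\alpha)\,f_\alpha(H)-f_\alpha(H+\alpha)\,f_{-\alpha}(H)=h_\alpha(H)
\end{equation*}
for each root $\alpha$) is correct as far as it goes.

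However, your key quantitative step is wrong, and the error is fatal to the plan. Writing $g(H)=f_\alpha(H)f_{-\alpha}(H-\alpha)$, the identity reads $g(H)-g(H+\alpha)=h_\alpha(H)$, and a difference in the direction $\alpha$ lowers the degree by exactly one; since $h_\alpha$ has degree $1$, the constraint is $\deg f_\alpha+\deg f_{-\alpha}=2$, not ``$\le 1$''. Already for $\mathfrak{sl}_2$ one solution is $e\cdot v=v$, $f\cdot v=(-\tfrac18 H^2+c)v$: one factor constant, the other \emph{quadratic}; and in the modules $M(\mathbf{a},b,S)$ recalled in Lemma \ref{fdssae} (and in the type $C_l$ modules, where $x_l.1$ is quadratic and $y_l.1$ constant) the same pattern occurs. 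Thus your asserted bound would exclude type $A_1$ itself and contradicts the very families you propose to exhibit in the existence direction. Beyond this, the non-existence direction — which is the actual content of the lemma — is only gestured at: ``typically the constraints force\dots'' is not an argument, and with the corrected bound $\deg f_\alpha+\deg f_{-\alpha}=2$ the case-by-case elimination of $B_l$, $D_l$, $E_6$, $E_7$, $E_8$, $F_4$, $G_2$ becomes a substantial computation that you have not carried out (this is precisely the difficulty Nilsson's coherent-family argument is designed to bypass). As it stands, the proposal neither reproduces the cited proof nor supplies a complete alternative.
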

For any finite-dimensional abelian Lie algebra $Z$, set $\tilde{\mathbf{g}}=\mathfrak{g}\oplus Z$ be the trivial central extension. Set $\tilde{\mathfrak{h}}=\mathfrak{h}\oplus Z$, this will be again abelian finite-dimensional Lie algebra.
\begin{lemm}\label{fdssae}
   Let $\mathfrak{g}$ be a finite-dimensional simple Lie algebra and ${Z}$ be a finite-dimensional Lie algebra.
   \begin{enumerate}
       \item Suppose $\mathfrak{g}$ is of type $A_l(l\geq 1)$. For any $\mathbf{a}=(a_1,\cdots, a_l)\in (\C^*)^l$, $b\in \mathcal{U}(Z)$, $\subseteq \overline{1,l+1}$, the vector space $M(\mathbf{a},b,S)=\mathcal{U}(\Tilde{\mathfrak{h}})$ becomes a $\tilde{\mathfrak{g}}$-module under the following action: for all $g\in \mathcal{U}(\Tilde{\mathfrak{h}})$,
        
       $$H_i. \tilde{g}=H_i\tilde{g}, \;i\in \mathbf{l}; \hspace{1cm}z.\tilde{g}=z\tilde{g},\; \forall z \in Z, $$
       
       $$x_i.\tilde{g}= a_i\{\delta_{i\in S}+ \delta_{i\notin S}(H_i-H_{i-1}-b-1)\}\{\delta_{i+1\in S} (H_{i+1}-H_i-b)+\delta_{i+1\notin S}\}w_i(\tilde{g}),$$

      $$y_i.\tilde{g}= a_i^{-1}\{\delta_{i\in S}(H_i-H_{i-1}-b)+ \delta_{i\notin S} \}\{\delta_{i+1\in S}  +\delta_{i+1\notin S}(H_{i+1}-H_i-b-1)\}w_i^{-1}(\tilde{g}),$$
      
      where $H_{l+1}=H_0:=0$ and $w_i$ is the automorphism of $\mathcal{U}(\tilde{\mathfrak{h}})$ which sends $H_i$ to $H_{i}-1$ and fixes other $H_j, \; j\neq i$ and $Z$. Moreover,
      $$\mathcal{M}(\tilde{\mathfrak{g}}, \tilde{\mathfrak{h}})=\{M(a,b,S): a\in (\C^*)^l,\; b\in \mathcal{U}(Z), S\subseteq \overline{1,l+1}\}.$$

      \item  $\mathcal{M}(\tilde{\mathfrak{g}}, \tilde{\mathfrak{h}})=\empty$ if and only if $\mathcal{M}(\mathfrak{g}, \mathfrak{h})=\varnothing$.
   \end{enumerate}
\end{lemm}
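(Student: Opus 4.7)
The plan is to settle part (2) by a direct specialization/extension-of-scalars argument that does not require the full classification, and to handle part (1) by adapting Nilsson's $A_l$ classification \cite{N2} to the enlarged base $\mathcal{U}(Z)$.

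For part (2), both directions are quick. For the ``if'' direction, given $M_0\in\mathcal{M}(\mathfrak{g},\mathfrak{h})$, I would form $M:=M_0\otimes_{\C}\mathcal{U}(Z)$ with $\mathfrak{g}$ acting on the first tensorand via the given action and $Z$ acting by left multiplication on the second; then $M\cong\mathcal{U}(\mathfrak{h})\otimes\mathcal{U}(Z)\cong\mathcal{U}(\tilde{\mathfrak{h}})$ as $\mathcal{U}(\tilde{\mathfrak{h}})$-modules, so $M\in\mathcal{M}(\tilde{\mathfrak{g}},\tilde{\mathfrak{h}})$. For the ``only if'' direction, given $M\in\mathcal{M}(\tilde{\mathfrak{g}},\tilde{\mathfrak{h}})$, identify $M=\mathcal{U}(\tilde{\mathfrak{h}})=\mathcal{U}(\mathfrak{h})\otimes\mathcal{U}(Z)$ and set $M_0:=M/(Z\cdot\mathcal{U}(Z))M$. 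Since $Z$ is central in $\tilde{\mathfrak{g}}$, this quotient inherits a $\mathfrak{g}$-module structure, and as a $\mathcal{U}(\mathfrak{h})$-module it is $\mathcal{U}(\mathfrak{h})$; hence $M_0\in\mathcal{M}(\mathfrak{g},\mathfrak{h})$.

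For part (1)(a), verification that the stated formulas give a $\tilde{\mathfrak{g}}$-action, I would note that setting $Z=0$ and regarding $b$ as a complex parameter recovers Nilsson's $A_l$-modules, for which all Serre and bracket relations of $\mathfrak{g}$ hold. These relations reduce to polynomial identities in the formal symbols $b$ and $H_1,\dots,H_l$, so they persist when $b$ is specialized to any element of $\mathcal{U}(Z)$, which commutes with every $H_j$ and is fixed by each $w_i$. The commutations $[z,x_i]=[z,y_i]=[z,H_j]=0$ for $z\in Z$ hold because $z$ acts by left multiplication and $w_i$ fixes $Z$ pointwise. This promotes Nilsson's $\mathfrak{g}$-module to a $\tilde{\mathfrak{g}}$-module.

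For the classification in part (1)(b), let $M\in\mathcal{M}(\tilde{\mathfrak{g}},\tilde{\mathfrak{h}})$ and identify $M=\mathcal{U}(\tilde{\mathfrak{h}})$. From $[H_j,x_i]=\delta_{ij}x_i$ one deduces, exactly as in \cite{N2}, that $x_i.g=f_i\,w_i(g)$ with $f_i:=x_i.1\in\mathcal{U}(\tilde{\mathfrak{h}})$, and similarly $y_i.g=g_i\,w_i^{-1}(g)$. Since $Z$ is central, the operators realizing $x_i$ and $y_i$ commute with left multiplication by $\mathcal{U}(Z)$, so $f_i$ and $g_i$ are polynomials in $H_1,\dots,H_l$ with coefficients in $\mathcal{U}(Z)$. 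I would then rerun Nilsson's inductive analysis with base ring $\mathcal{U}(Z)$ in place of $\C$: the relations $[x_i,y_i]=H_i-H_{i-1}$, the vanishing $[x_i,y_j]=0$ for $i\ne j$, and the Serre relations pin $f_i,g_i$ down to the linear-factor products appearing in the stated formulas, with a single central parameter $b\in\mathcal{U}(Z)$ and subset $S\subseteq\overline{1,l+1}$. The leading coefficients $a_i$ must be units of $\mathcal{U}(Z)$; since $Z$ is abelian, $\mathcal{U}(Z)$ is a polynomial ring with $\mathcal{U}(Z)^{\times}=\C^{*}$, forcing $a_i\in\C^{*}$.

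The main obstacle is the relative version of Nilsson's rigidity step: over $\C$, after fixing $S$, the Serre and commutator constraints leave only a one-parameter family governed by $b\in\C$, and I expect the same to be true over $\mathcal{U}(Z)$ because every step of the argument is polynomial in the parameters and $\mathcal{U}(Z)$ is a polynomial ring. A clean path is either a direct rerun with $\mathcal{U}(Z)$-valued unknowns or a localization-and-descent argument using $K=\mathrm{Frac}(\mathcal{U}(Z))$; in the latter strategy one must verify that the $K$-valued parameters produced by Nilsson's theorem over $K$ actually lie in $\mathcal{U}(Z)$, which is where the integrality of the $\mathcal{U}(\tilde{\mathfrak{h}})$-module structure must be exploited.
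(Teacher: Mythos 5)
You should first be aware that the paper itself contains no proof of this lemma: it appears in Section \ref{sec: Aux Lem}, which explicitly ``recall[s] several known results'', and it is in substance the $\mathfrak{g}\oplus Z$ classification of Cai--Tan--Zhao (\cite{CTZ1}, \cite{CTZ2}), building on Nilsson \cite{N2}. The only trace of its proof visible in the paper is the base-change technique reused in Section \ref{FTA} for the Witt algebra: tensor over $\mathcal{U}(Z)$ with $\overline{\C(Z)}$, apply the known classification over an algebraically closed field, then descend the parameters. Measured against that, your strategy is essentially the same as the source's. Your part (2) is complete and correct: $M_0\otimes_\C\mathcal{U}(Z)$ lands in $\mathcal{M}(\tilde{\mathfrak{g}},\tilde{\mathfrak{h}})$, and conversely $Z\cdot M$ is a $\tilde{\mathfrak{g}}$-submodule because $Z$ is central, so $M/ZM\cong\mathcal{U}(\mathfrak{h})$ gives an object of $\mathcal{M}(\mathfrak{g},\mathfrak{h})$. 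Your verification in part (1) that the formulas define a $\tilde{\mathfrak{g}}$-action, by treating $b$ as a central formal parameter fixed by the $w_i$ and invoking that the relations hold identically in $b$, is also fine.

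The genuine incompleteness is in the classification half of part (1), which you state as an expectation rather than prove, and of your two proposed completions the ``direct rerun of Nilsson over $\mathcal{U}(Z)$'' is the shaky one: Nilsson's analysis repeatedly uses that the base field is algebraically closed (factoring the polynomials $x_i.1$, $y_i.1$ into linear factors and solving for their roots), and $\mathcal{U}(Z)$ is only a polynomial ring, so it is not clear this transfers verbatim. The workable route is the localization one, and it needs $\overline{\mathrm{Frac}(\mathcal{U}(Z))}=\overline{\C(Z)}$ rather than the fraction field itself, together with the (routine but necessary) remark that Nilsson's theorem is valid over any algebraically closed field of characteristic zero. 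The descent step you flag is exactly where the remaining content lies and must be written out: identify $\overline{\C(Z)}\otimes_{\mathcal{U}(Z)}\mathcal{U}(\tilde{\mathfrak{h}})$ with $\overline{\C(Z)}[H_1,\dots,H_l]$, normalize the isomorphism furnished by Nilsson's theorem (possible because units of a polynomial ring are scalars), and then use $x_i.1,\,y_i.1\in\mathcal{U}(\tilde{\mathfrak{h}})$ to read off, in each of the four cases determined by whether $i,i+1$ lie in $S$, that $a_i$ and $a_i^{-1}$ both lie in $\mathcal{U}(Z)$, hence $a_i\in\mathcal{U}(Z)^{\times}=\C^*$ (this uses that $Z$ is abelian, as it is in the paper's setting), and subsequently that $b\in\mathcal{U}(Z)$. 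With these two points supplied, your outline becomes the proof given in \cite{CTZ1} and \cite{CTZ2}, i.e.\ the argument the paper is implicitly relying on.
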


Now suppose $\mathfrak{g}$ is of type $C_l(l\geq 2)$. Now for any $\mathbf{a}\in (\C^*)^l, \; S\subseteq \overline{1,l}$, let us recall the $\tilde{\mathfrak{g}}$-module $M(\mathbf{a},S)=\mathcal{U}(\tilde{\mathfrak{h}})$ from \cite{CTZ2} with the following actions:
$$H_i. \tilde{g}=H_i\tilde{g}, \;i\in \mathbf{l}; \hspace{1cm}z.\tilde{g}=z\tilde{g},\; \forall z \in Z,$$
$$x_k.\tilde{g}= a_k\{\delta_{k\in S}+ \delta_{k\notin S}(H_k-H_{k-1}-b-\frac{1}{2})\}\{\delta_{k+1\in S} ((1+\delta_{k,l-1})H_{k+1}-H_k+\frac{1}{2})+\delta_{k+1\notin S}\}w_k(\tilde{g}),$$

$$y_k.\tilde{g}= a_k^{-1}\{\delta_{k\in S}(H_k-H_{k-1}-\frac{1}{2})+ \delta_{k\notin S} \}\{\delta_{k+1\in S}  +\delta_{k+1\notin S}((1+\delta_{k,l-1})H_{k+1}-H_k-\frac{1}{2})\}w_k^{-1}(\tilde{g}),$$

      $$x_l.\tilde{g}= a_l\{\delta_{l\in S}- \delta_{l\notin S}(H_l-\frac{1}{2}H_{l-1}-\frac{3}{4})(H_l-\frac{1}{2})H_{l-1}-\frac{1}{4}\}w_l(\tilde{g}),$$

      $$ y_l.\tilde{g}= a_l^{-1}\{-\delta_{l\in S}(H_l-\frac{1}{2}H_{l-1}+\frac{3}{4})(H_l-\frac{1}{2}H_{l-1}+\frac{1}{4})+ \delta_{l\notin S} \}w_l^{-1}(\tilde{g}),$$
      \vspace{.5cm}
 where $\tilde{g}\in \mathcal{U}(\tilde{\mathfrak{h}}),\; 1\leq k\leq l-1$, $H_0:=0$ and $w_i$ is the automorphism of $\mathcal{U}(\tilde{\mathfrak{h}})$ which sends $H_i$ to $H_{i}-1$ and fixes other $H_j, \; j\neq i$ and $Z$. 

 \begin{lemm}{(\cite{CTZ1}, Lemma 4)}
     Let $\mathfrak{g}=\mathfrak{sp}_{2l}(l\geq 2)$ with a Cartan subalgebra $\mathfrak{h}$. For a finite-dimensional abelian Lie algebra $Z$, we have $\mathcal{M}(\tilde{\mathfrak{g}}, \tilde{\mathfrak{h}})=\{M(\mathbf{a},S): \mathbf{a}\in (\C^*)^l,\; S\subseteq \overline{1,l}\}$. 
 \end{lemm}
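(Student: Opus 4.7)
The plan is to establish both containments in the equality $\mathcal{M}(\tilde{\mathfrak{g}},\tilde{\mathfrak{h}}) = \{M(\mathbf{a},S)\}$.

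\textbf{Sufficiency.} I would first verify that the displayed formulas endow $M(\mathbf{a},S) = \mathcal{U}(\tilde{\mathfrak{h}})$ with a genuine $\tilde{\mathfrak{g}}$-module structure. Centrality of $Z$ is automatic because $Z \subseteq \tilde{\mathfrak{h}}$ acts by left multiplication and each automorphism $w_k$ fixes $Z$ pointwise. The substantive content is then the Chevalley--Serre relations for $\mathfrak{sp}_{2l}$: the weight compatibility $[H_j,x_k] = \delta_{jk}x_k$ is immediate from the shift property of $w_k$; the Cartan brackets $[x_k,y_k] = h_k$ reduce to small polynomial identities in the commutative ring $\mathcal{U}(\tilde{\mathfrak{h}})$; the commutativity $[x_i,y_j]=0$ is transparent for $|i-j|\ge 2$ and a short identity for $|i-j|=1$; the short-root Serre relations can be imported from the $A_{l-1}$ case treated in Lemma~\ref{fdssae}; and the long-root Serre relations at position $l$ are exactly what forces the specific quadratic form of $P_l, Q_l$ together with the doubling factor $(1+\delta_{k,l-1})$ in $P_{l-1}, Q_{l-1}$.

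\textbf{Necessity.} Let $M \in \mathcal{M}(\tilde{\mathfrak{g}}, \tilde{\mathfrak{h}})$ and fix a $\mathcal{U}(\tilde{\mathfrak{h}})$-isomorphism $M \cong \mathcal{U}(\tilde{\mathfrak{h}})$ with cyclic generator $\tilde{1}$. For any $z \in Z$ the operator $z\cdot$ commutes with left multiplication by $\mathcal{U}(\tilde{\mathfrak{h}})$ (as $Z$ is central), and since $z \in \tilde{\mathfrak{h}}$ already acts on $\tilde{1}$ by left multiplication by $z$, the entire $Z$-action is forced. Combining the weight relation $[H_j,x_k] = \delta_{jk}x_k$ with the identity $x_k u = w_k(u) x_k$ in $\mathcal{U}(\tilde{\mathfrak{g}})$ (for $u \in \mathcal{U}(\tilde{\mathfrak{h}})$ and $w_k$ sending $H_k \mapsto H_k - 1$) shows $x_k\cdot u = P_k\,w_k(u)$ for some $P_k \in \mathcal{U}(\tilde{\mathfrak{h}})$, and similarly $y_k \cdot u = Q_k\, w_k^{-1}(u)$. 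Classifying $M$ thus reduces to classifying the admissible tuples $(P_k,Q_k)_{k=1}^l$.

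To pin these tuples down, I would first invoke the $A_{l-1}$ classification. The subalgebra $\mathfrak{g}_0 = \langle x_i, y_i : i \le l-1\rangle \subseteq \mathfrak{sp}_{2l}$ is isomorphic to $\mathfrak{sl}_l$ and is centralised by $\C H_l \oplus Z$, so $M$ is simultaneously an object of $\mathcal{M}(\mathfrak{g}_0 \oplus (\C H_l \oplus Z), \tilde{\mathfrak{h}})$. Lemma~\ref{fdssae}(1) then determines $P_k, Q_k$ for $k \le l-1$ up to parameters $a_1,\ldots,a_{l-1} \in \C^*$, a subset $S_0 \subseteq \overline{1,l}$, and an auxiliary element $b \in \mathcal{U}(\C H_l \oplus Z)$. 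Comparing the $\mathfrak{sl}_l$- and $\mathfrak{sp}_{2l}$-Cartan normalisations (in particular the expression of $h_{l-1}$) then collapses $b$ to a specific scalar. Finally, $P_l, Q_l$ are determined by combining $[x_l, y_l] = h_l$, the vanishing $[x_j, y_l] = 0$ for $j \le l-2$, and the two long-root Serre relations $(\operatorname{ad} x_{l-1})^2(x_l) = 0$ and $(\operatorname{ad} x_l)^2(x_{l-1}) = 0$.

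\textbf{Expected main obstacle.} The delicate step is this last one: translating the long-root Serre relations, after the non-commutative $w_k$-shifts, into polynomial identities in $\mathcal{U}(\tilde{\mathfrak{h}})$ and showing that (once the $A_{l-1}$ data is fixed) the only solutions are exactly the stated quadratic expressions in $H_l - \tfrac{1}{2}H_{l-1}$, coupled to the doubling factor $(1+\delta_{k,l-1})$ in $P_{l-1}, Q_{l-1}$. This is precisely what forces the remaining free data to reduce to the single scalar $a_l \in \C^*$ and the binary choice $l \in S$ vs.\ $l \notin S$, simultaneously eliminating both any residual $H_l$-dependence in $b$ and any further degrees of freedom in $P_l, Q_l$, and thereby yielding the exact parameter set $\{M(\mathbf{a},S) : \mathbf{a}\in(\C^*)^l,\,S\subseteq\overline{1,l}\}$.
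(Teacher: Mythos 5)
First, note that the paper itself does not prove this statement: it is recalled verbatim from \cite{CTZ1} (Lemma 4), and the proof on record there (as in \cite{CTZ2}) handles the passage from $\mathfrak{g}$ to $\tilde{\mathfrak{g}}=\mathfrak{g}\oplus Z$ not by redoing the $C_l$ classification, but by extending scalars to the algebraic closure $\overline{\C(Z)}$ of the fraction field of $\mathcal{U}(Z)$, invoking the already-known classification of $\mathcal{M}(\mathfrak{sp}_{2l},\mathfrak{h})$ (\cite{N1}, \cite{N2}, \cite{CTZ2}) over that field, and then showing the parameters must lie in $\C^*$ -- the same trick this paper spells out in Section \ref{FTA} for $\widetilde{\mathcal{W}_n}$ (Proposition \ref{der}). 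Your route is genuinely different: you propose to re-derive the $C_l$ classification from scratch via the $A_{l-1}$ subalgebra plus an analysis of the long root.

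The problem is that, as written, your argument stops exactly where the real content begins. The reduction to tuples $(P_k,Q_k)$ and the forcing of the $Z$-action are fine and standard, but everything that distinguishes type $C_l$ -- that $P_l,Q_l$ are the specific quadratics in $H_l-\tfrac12 H_{l-1}$, that the factor $(1+\delta_{k,l-1})$ appears, that the auxiliary parameter $b$ disappears entirely (unlike in type $A$), and that no further solutions of the resulting difference/polynomial equations exist -- is only announced in your ``expected main obstacle'' paragraph, not proved; the sufficiency check of the Serre relations is likewise only asserted. There is also a concrete gap in the step where you invoke Lemma \ref{fdssae}(1): the elements $H_1,\dots,H_{l-1}$ of the $C_l$ Cartan are fundamental coweights of $\mathfrak{sp}_{2l}$ and do \emph{not} lie in $\mathfrak{g}_0\cong\mathfrak{sl}_l$ (they differ from the intrinsic $\mathfrak{sl}_l$-coweights by multiples of $H_l$, which spans the centre of $\mathfrak{g}_0\oplus\C H_l\cong\mathfrak{gl}_l$); so the formulas of Lemma \ref{fdssae}(1) come out in shifted coordinates, with the shift absorbed into an element of $\mathcal{U}(\C H_l\oplus Z)$, and eliminating this $H_l$-dependence is part of, not prior to, the long-root computation you defer. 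Until those functional equations are actually solved (essentially repeating Nilsson's $C_l$ analysis over the ring $\mathcal{U}(Z)$, or else using Lemma \ref{pa}-type degree arguments after the $\overline{\C(Z)}$ base change), the claimed parameter set $\{M(\mathbf{a},S):\mathbf{a}\in(\C^*)^l,\ S\subseteq\overline{1,l}\}$ is not established; the shorter and safer path is the scalar-extension reduction to the known $\mathcal{M}(\mathfrak{sp}_{2l},\mathfrak{h})$ classification used in \cite{CTZ1}.
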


 \begin{lemm}{(\cite{CTZ1}, \cite{N1}, \cite{N2})}\label{simplecondition}
     \begin{enumerate}
         \item $M(\mathbf{a},b, S)\in \mathcal{M}(\mathfrak{g}(A_l), \mathfrak{h})$ is simple if and only if $(l+1)b \notin \Z_+$ or $S\neq \emptyset$ or $S\neq \overline{1,l+1}$
         \item Any module in $\mathcal{M}(\mathfrak{g}(C_l), \mathfrak{h})$ is simple.
     \end{enumerate}
 \end{lemm}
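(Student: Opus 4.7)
\emph{Proof plan.} I would treat the two parts separately, exploiting the common feature that the underlying space $M = \mathcal{U}(\tilde{\mathfrak{h}}) = \C[H_1,\dots,H_l]\otimes\mathcal{U}(Z)$ carries an action in which each Chevalley root vector $x_i,y_i$ acts as a polynomial in $H_1,\ldots,H_l$ composed with a shift operator $w_i^{\pm 1}$, while $\tilde{\mathfrak{h}}$ acts by multiplication. Any submodule $N\subseteq M$ is therefore stable under multiplication by $\mathcal{U}(\tilde{\mathfrak{h}})$, so simplicity is equivalent to the statement that $1\in N$ for every nonzero $N$. The overall strategy is a degree-reduction argument in the polynomial variables $H_j$.

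For Part (1), type $A_l$, the sufficiency direction proceeds as follows: given $0\neq f\in N$, one computes $[x_i,y_i]\cdot\tilde g = R_i(H)\tilde g$, where
\[ R_i(H) = P_i(H)\,w_i(Q_i(H)) - Q_i(H)\,w_i^{-1}(P_i(H)), \]
and $P_i,Q_i$ are the polynomial factors appearing in the $x_i,y_i$ actions. When the hypothesis on $(b,S)$ holds, the family $\{R_i\}_{i=1}^{l}$ together with multiplication by $\tilde{\mathfrak{h}}$ and the shifts $w_i$ suffices to strictly reduce the $H$-degree of any nonzero element; iterating yields $1\in N$. For the converse, when $S$ is one of the degenerate choices $\emptyset$ or $\overline{1,l+1}$ and $(l+1)b\in\Z_+$, I would exhibit an explicit proper submodule as the $\mathcal{U}(\tilde{\mathfrak{g}})$-submodule generated by a carefully chosen polynomial $F(H_1,\dots,H_l)$ whose linear factors interlock with the shifts $w_i^{\pm 1}$ under the integrality $(l+1)b\in\Z_+$, so that $F$ divides both $x_i\cdot F$ and $y_i\cdot F$ for every $i$.

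For Part (2), type $C_l$, the same degree-reduction strategy applies, but now $x_l,y_l$ act with quadratic (rather than linear) polynomial factors in $H_l$. The commutators $[x_l,y_l]$ therefore produce polynomial operators of one degree higher than in type $A$, and combining these with the $R_k$ for $k<l$ shows that no global shifted polynomial $F$ can be consistent with all the actions for any choice of parameters. Hence the degree reduction always proceeds down to $1$, and simplicity holds unconditionally.

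The principal obstacle is the converse direction in Part (1): identifying the correct polynomial $F$ (whose total $H$-degree is controlled by $(l+1)b$) such that its shifted linear factors are compatible with all $l$ Chevalley generators simultaneously, and verifying that $(l+1)b\in\Z_+$ is precisely the integrality needed. The sufficiency inductions in both parts are routine in principle but demand a careful case analysis on the configuration of $S$, so that the polynomial identities obtained from the commutators genuinely strictly decrease the degree in every situation rather than merely reshuffling leading terms.
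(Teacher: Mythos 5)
The paper itself does not prove Lemma \ref{simplecondition}: it is quoted from \cite{N1}, \cite{N2}, \cite{CTZ1}, so your proposal has to be judged against the arguments in those references. Your overall frame — submodules of $M=\mathcal{U}(\tilde{\mathfrak h})$ are ideals because $\tilde{\mathfrak h}$ acts by multiplication, simplicity amounts to forcing a nonzero scalar into every nonzero submodule, degree reduction for sufficiency and an invariant principal ideal $(F)$ for the converse — is indeed the strategy of the cited papers. But as written the plan has genuine gaps at exactly the points where the stated conditions enter.

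First, your proposed degree-reduction engine cannot work: since $[x_i,y_i]=h_i\in\mathfrak h$, the operator $R_i=P_i\,w_i(Q_i)-Q_i\,w_i^{-1}(P_i)$ is nothing but multiplication by the linear polynomial $h_i$, and multiplication operators preserve any ideal $N$ without lowering degree; likewise the bare shifts $w_i$ are not module operations in general (only the twisted operators $x_i\cdot{}=P_iw_i$ and $y_i\cdot{}=Q_iw_i^{-1}$ are, unless the factor degenerates to a nonzero constant, which depends on $S$). The actual mechanism combines iterates of $P_iw_i$ with the ideal property to put elements such as $\bigl(\prod_{j=0}^{m-1}w_i^{j}(P_i)\bigr)(w_i-\mathrm{Id})^{m}(f)$ into $N$ and then uses the degree Lemma \ref{pa}; even so, the multipliers $\prod_j w_i^{j}(P_i)$ reintroduce degree, and it is precisely in removing them that the hypotheses on $(b,S)$ (and, in type $C_l$, the quadratic factors at the long root) are needed. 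Your sketch never engages with this step. Second, the converse rests entirely on ``a carefully chosen polynomial $F$'' with $F\mid x_i\cdot F$ and $F\mid y_i\cdot F$: granting such an $F$, the ideal $(F)$ is indeed a proper submodule, but exhibiting $F$ and showing its existence forces $(l+1)b\in\Z_+$ together with $S\in\{\emptyset,\overline{1,l+1}\}$ is the whole content of the ``only if'' direction and is missing; note also that the condition should be read as in Corollary \ref{coro}, namely $(l+1)b\notin\Z_+$ or $1\le|S|\le l$ (``and'' between the two conditions on $S$), since as literally written with ``or'' it is vacuous. Finally, in part (2) the nonexistence of an invariant principal ideal does not by itself yield simplicity, since submodules need not be principal; the sufficiency argument still has to be run for arbitrary submodules.
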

 \begin{lemm}\label{pa}{(\cite{CTZ2}), Lemma 2)}
   Let $\C[T_1, \dots, T_m]$ be a polynomial algebra in $m$ indeterminates $T_1, \dots, T_m$ over $\C$ and $w_i$ be the automorphism of $\C[T_1, \dots, T_m]$ which sends $T_i$ to $T_i-1$ and fixes $T_j$ for $j\neq i$. For any $g\in \C[T_1, \dots, T_m],\; k\in \Z\setminus \{0\}$ and $k^\prime\in \Z_+$ we have
   $$\hbox{deg}_{T_i}(w_i^k-\hbox{Id})(g)=\hbox{deg}_{T_i}(g)-1,$$
   and 
   $$\hbox{deg}_{T_i}(w_i-\hbox{Id})^{k^\prime}(g)=\begin{cases}  \hbox{deg}_{T_i}(g)-k^\prime &
\hbox{if}\; k^\prime \leq deg_{T_i}(g) \\
 -1 &\hbox{if}\; k^\prime > deg_{T_i}(g) ,\end{cases} $$
 where $\hbox{deg}_{T_i}(0)=-1$
 \end{lemm}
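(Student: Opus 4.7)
The plan is to reduce to a single-variable calculation and then iterate. Since $w_i$ acts as the identity on every $T_j$ with $j\neq i$, it is natural to regard $\C[T_1,\dots,T_m]$ as $R[T_i]$ where $R=\C[T_1,\dots,\widehat{T_i},\dots,T_m]$, and to write $g=\sum_{j=0}^{n}c_j T_i^{j}$ with $c_j\in R$, $c_n\neq 0$, so that $\deg_{T_i}(g)=n$ (with the convention $n=-1$ when $g=0$). The whole lemma is then a statement about how the operators $w_i^k-\mathrm{Id}$ and $(w_i-\mathrm{Id})^{k'}$ lower the $T_i$-degree.

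For part (1), observe that $w_i^k(g)=\sum_{j=0}^{n}c_j(T_i-k)^{j}$, so
\[
(w_i^k-\mathrm{Id})(g)=\sum_{j=1}^{n}c_j\bigl[(T_i-k)^{j}-T_i^{j}\bigr].
\]
Each bracket $(T_i-k)^{j}-T_i^{j}$ has $T_i$-degree exactly $j-1$ with leading coefficient $-jk$, which is non-zero since $k\neq 0$ and $j\geq 1$. Summing, the highest power of $T_i$ that appears is $T_i^{n-1}$, with coefficient $-nk\,c_n\in R$; because $R$ is an integral domain and $c_n\neq 0$, this coefficient is non-zero whenever $n\geq 1$, giving the desired degree $n-1$. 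When $n=0$, $g\in R$ is fixed by $w_i^k$ so $(w_i^k-\mathrm{Id})(g)=0$ has degree $-1=n-1$, which still matches.

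For part (2) I would argue by induction on $k'\geq 1$. The base case $k'=1$ is just part (1) with $k=1$. For the inductive step, set $h=(w_i-\mathrm{Id})^{k'}(g)$; by the inductive hypothesis $\deg_{T_i}(h)=n-k'$ when $k'\leq n$ and $\deg_{T_i}(h)=-1$ otherwise. Applying part (1) to $h$ with $k=1$ gives three sub-cases: if $\deg_{T_i}(h)\geq 1$, i.e.\ $k'<n$, then $(w_i-\mathrm{Id})(h)$ has $T_i$-degree $n-k'-1=n-(k'+1)$, as required; if $\deg_{T_i}(h)=0$, i.e.\ $k'=n$, then $h\in R$ is $w_i$-invariant, so $(w_i-\mathrm{Id})(h)=0$ has degree $-1$, which matches the formula since $k'+1>n$; if $h=0$ already, then certainly $(w_i-\mathrm{Id})(h)=0$.

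The argument is entirely elementary, since the action of $w_i$ on $R[T_i]$ is just the affine substitution $T_i\mapsto T_i-1$. The only point requiring a moment's attention is the transition case $k'=\deg_{T_i}(g)$, where one last application of $w_i-\mathrm{Id}$ annihilates the surviving $T_i$-constant term and produces the jump from degree $0$ to degree $-1$ in the piecewise formula; keeping track of this boundary is the only thing that can trip one up.
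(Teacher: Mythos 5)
Your proof is correct. Note, however, that the paper itself offers no argument for this lemma: it is quoted verbatim as a known auxiliary result, with the proof deferred to Lemma~2 of \cite{CTZ2}, so there is no in-paper proof to compare against. Your write-up supplies exactly the standard argument that the citation points to: viewing $\C[T_1,\dots,T_m]$ as $R[T_i]$ with $R=\C[T_1,\dots,\widehat{T_i},\dots,T_m]$ (legitimate because $w_i$ fixes $R$ pointwise), the leading-coefficient computation $-nk\,c_n\neq 0$ for $(w_i^k-\mathrm{Id})$, using that $R$ is a domain, and then induction on $k'$ for the iterated operator, with the boundary case $k'=\deg_{T_i}(g)$ handled by the $w_i$-invariance of elements of $R$. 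The only cosmetic remark is that in part (1) you implicitly assume $g\neq 0$ (for $g=0$ the displayed formula would read $-2$ while the true degree is $-1$); this is an imprecision of the lemma as stated rather than of your argument, and your treatment of the cases $n\geq 1$ and $n=0$ is complete for the way the lemma is actually used.
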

\section{Cartan Free modules over $\widetilde{\mathfrak{g}}_n$}\label{TA}
In this section, we will study the categories $\mathcal{M}(\widetilde{\mathfrak{g}}_n, \widetilde{\mathfrak{h}}_n)$ for toroidal Lie algebras $\widetilde{\mathfrak{g}}_n$.\\

We see that $\widetilde{\mathfrak{h}}_n$ is an abelian Lie algebra, so $\mathcal{U}(\widetilde{\mathfrak{h}}_n)=\C[H_1, \dots, H_l, d_1, \dots, d_n]$. Now define some automorphisms of $\C[H_1, \dots, H_l, d_1, \dots, d_n]$ as follows: for any $\tilde{g}(H_1, \dots, H_l, d_1, \dots, d_n)\in  \mathcal{U}(\widetilde{\mathfrak{h}}_n),\; 1\leq i\leq l,\; 1\leq j\leq n$, 
$$\sigma_i: \mathcal{U}(\widetilde{\mathfrak{h}}_n)\rightarrow \mathcal{U}(\widetilde{\mathfrak{h}}_n),\; \tilde{g}(H_1, \dots, H_l, d_1, \dots, d_n) \mapsto \tilde{g}(H_1, \dots,H_i-1, \dots, H_l, d_1, \dots, d_n);$$
$$\tau_j:\mathcal{U}(\widetilde{\mathfrak{h}}_n)\rightarrow \mathcal{U}(\widetilde{\mathfrak{h}}_n),\; \tilde{g}(H_1, \dots, H_l, d_1, \dots, d_n) \mapsto \tilde{g}(H_1, \dots, H_l, d_1, \dots,d_j-1,\dots,  d_n).$$
Note that $\{\sigma_i,\tau_j:1\leq i\leq l, \; 1\leq j\leq n\}$ is a commuting family of automorphisms of $\mathcal{U}(\widetilde{\mathfrak{h}}_n)$.\\
Now we define $\mathcal{P}_H:= \C[H_1, \dots, H_l],\; \mathcal{P}_D:= \C[ d_1, \dots, d_n]$ and\\ $\mathcal{P}_i=\C[H_1, \dots,H_{i-1},H_{i+1},\dots  H_l, d_1, \dots, d_n],\; \mathcal{P}^j=\C[H_1, \dots, H_l, d_1, \dots,d_{j-1},d_{j+1},\dots d_n],$ for $1\leq i\leq l,\; 1\leq j \leq n$.
Let $\mathbf{a}=(a_1)\in \Z^n$ and for any $\mathbf{\lambda}=(\lambda_1,\dots ,\lambda_n)\in (\C^*)^n$ we define $\mathbf{\lambda}^{\mathbf{a}}=\lambda_1^{a_1}\dots \lambda_n^{a_n}\in \C^*$ and $\tau^{\mathbf{a}} =\tau_1^{a_1}\dots \tau_n^{a_n}\in \hbox{Aut}(\mathcal{U}(\widetilde{\mathfrak{h}}_n))$. 

For any $M_\mathfrak{g}\in \mathcal{M}(\mathfrak{g},\mathfrak{h})$, we have $M_\mathfrak{g}=\mathcal{U}(\mathfrak{h})\subset \mathcal{U}(\widetilde{\mathfrak{h}}_n)$. Let $\mathbf{\lambda}\in (\C^*)^n$. For any $\tilde{g}\in \mathcal{U}(\widetilde{\mathfrak{h}}_n)$ and $a\in \Z^n$, we define
\begin{equation}\label{ms}
\begin{cases}
    t^aK_j. \tilde{g}=0,\; d_j.  \tilde{g}=d_j\tilde{g}\\
    h(a).  \tilde{g}=\mathbf{\lambda}^ah\tau^a( \tilde{g})\\
    x_i(a).  \tilde{g}= \mathbf{\lambda}^a\sigma_i\tau^a(\tilde{g})(x_i.1)\\
     y_i(a).  \tilde{g}= \mathbf{\lambda}^a\sigma_i^{-1}\tau^a(\tilde{g})(y_i.1)
\end{cases}
\end{equation}
where $i\in \overline{1,l}$ and $j\in \overline{1,n}$ and $x_i.1, y_i.1$ are determined by the $\mathfrak{g}$-module structure on $M_{\mathfrak{g}}$. It is easy to see that (\ref{ms}) defines a $\mathcal{U}(\widetilde{\mathfrak{g}}_n)$-module structure on $\mathcal{U}(\widetilde{\mathfrak{h}}_n)$. We denote the corresponding module as $\widetilde{M}_{\widetilde{\mathfrak{g}}_n}(\mathbf{\lambda},M_\mathfrak{g})$.
It is easy to see that $\widetilde{M}_{\widetilde{\mathfrak{g}}_n}(\mathbf{\lambda},M_\mathfrak{g}) \in \mathcal{M}(\widetilde{\mathfrak{g}}_n, \widetilde{\mathfrak{h}}_n)$. We see that as vector space $\widetilde{M}_{\widetilde{\mathfrak{g}}_n}(\mathbf{\lambda},M_\mathfrak{g})\cong M_\mathfrak{g} \otimes \mathcal{P}_D$. We can also define a  $\widetilde{\mathfrak{g}}_n$-module structure on $M_\mathfrak{g} \otimes \mathcal{P}_D$ by similar actions as in \ref{ms}. In this case we will have $\widetilde{M}_{\widetilde{\mathfrak{g}}_n}(\mathbf{\lambda},M_\mathfrak{g})\cong M_\mathfrak{g} \otimes \mathcal{P}_D$ as $\widetilde{\mathfrak{g}}_n$-module.
\begin{prop}\label{prop charc}
    The $\widetilde{\mathfrak{g}}_n$-module $\widetilde{M}_{\widetilde{\mathfrak{g}}_n}(\mathbf{\lambda},M_\mathfrak{g})$ is simple if and only if the corresponding $\mathfrak{g}$-module $M_\mathfrak{g}$ is simple.
\end{prop}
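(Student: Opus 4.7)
The plan is to handle the two implications separately, with the forward one (simple $\mathfrak{g}$-module $\Rightarrow$ simple $\widetilde{\mathfrak{g}}_n$-module) carrying the real content. For the \emph{only if} direction, suppose $M_\mathfrak{g}$ has a proper nonzero $\mathfrak{g}$-submodule $N_\mathfrak{g}$. Reading off the formulas in (\ref{ms}) on the model $\widetilde{M}_{\widetilde{\mathfrak{g}}_n}(\lambda, M_\mathfrak{g}) \cong M_\mathfrak{g} \otimes \mathcal{P}_D$, every generator of $\widetilde{\mathfrak{g}}_n$ acts as a tensor operator: each $t^a K_j$ acts as zero, each $d_j$ multiplies on the second factor, and each $x(a)$ acts as (the original $\mathfrak{g}$-action of $x$ on the first factor) tensored with the shift-and-rescale operator $\lambda^a \tau^a$ on the second factor. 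All of these preserve $N_\mathfrak{g} \otimes \mathcal{P}_D$, which is therefore a proper nonzero $\widetilde{\mathfrak{g}}_n$-submodule of $\widetilde{M}$.

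For the \emph{if} direction, fix any nonzero submodule $N \subseteq \widetilde{M}$ and pick $0 \neq v \in N$. View $v$ inside $\mathcal{U}(\widetilde{\mathfrak{h}}_n) = \mathcal{P}_H \otimes \mathcal{P}_D$ and let $\deg_D(v)$ denote its total degree in $d_1, \ldots, d_n$. The key identity is
\begin{equation*}
H_1(a) \cdot v - \lambda^a H_1(0) \cdot v = \lambda^a H_1 \bigl(\tau^a - \mathrm{Id}\bigr)(v),
\end{equation*}
whose left-hand side lies in $N$ for every $a \in \Z^n$. When $\deg_D(v) > 0$, any choice of $a \neq 0$ makes the right-hand side nonzero and of strictly smaller $\deg_D$: the top-degree $d$-parts of $v$ and $\tau^a(v)$ cancel, and a nonconstant polynomial in $d_1, \ldots, d_n$ is never shift-invariant, while multiplication by $H_1$ preserves both nonzeroness and $\deg_D$ because $\mathcal{P}_H \otimes \mathcal{P}_D$ is an integral domain (Lemma \ref{pa} records the quantitative degree drop). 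Iterating this degree-reduction extracts a nonzero element $v' \in N$ with $\deg_D(v') = 0$, that is $v' \in M_\mathfrak{g} \otimes 1$.

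To close the argument, I note that the subalgebra $\mathfrak{g} \otimes t^0 \subset \widetilde{\mathfrak{g}}_n$ acts on $M_\mathfrak{g} \otimes 1$ exactly as the given $\mathfrak{g}$-module structure on $M_\mathfrak{g}$, since $\lambda^0 = 1$ and $\tau^0 = \mathrm{Id}$ leave the $\mathcal{P}_D$-slot untouched. Simplicity of $M_\mathfrak{g}$ then forces $\mathcal{U}(\mathfrak{g}) \cdot v' = M_\mathfrak{g} \otimes 1 \subseteq N$, and acting subsequently by arbitrary monomials in the commuting $d_1, \ldots, d_n$ (which act by multiplication on $\mathcal{P}_D$) yields $M_\mathfrak{g} \otimes \mathcal{P}_D = \widetilde{M} \subseteq N$. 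The main obstacle is really the degree-reduction step; the rest is bookkeeping. The reason one cannot use a plain Lie bracket but must instead take the specific combination $H_1(a) - \lambda^a H_1(0)$ is precisely to cancel the unwanted scalar factor $\lambda^a$ that otherwise obstructs the recursion.
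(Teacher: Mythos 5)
Your proposal is correct and follows essentially the same route as the paper: the reducible direction via the submodule $N_\mathfrak{g}\otimes\mathcal{P}_D$, and the simple direction via the operators $h(a)-\lambda^{a}h(0)$, which act as $\lambda^{a}h\,(\tau^{a}-\mathrm{Id})$ and strictly lower the degree in the $d$-variables, after which simplicity of $M_\mathfrak{g}$ together with multiplication by the $d_j$'s completes the argument (the paper performs the same reduction variable-by-variable with $a=e_j$ and a minimal-degree choice rather than by total $d$-degree). One small imprecision: it is not true that \emph{any} $a\neq 0$ makes $(\tau^{a}-\mathrm{Id})(v)$ nonzero (for instance $v=d_1-d_2$ is invariant under $\tau^{(1,1,0,\dots,0)}$), so you should choose $a=e_j$ with $\deg_{d_j}(v)>0$, exactly as the paper does; with that choice your degree-drop step, and hence the whole argument, goes through.
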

\begin{proof}
Suppose $M_{\mathfrak{g}}$ is not simple as $\mathfrak{g}$-module and $N$ be a non-zero proper submodule of $M_{\mathfrak{g}}$. Then by (\ref{ms}) we see that $N\otimes \mathcal{P}_D$ is a nonzero proper 
$\widetilde{\mathfrak{g}}_n$-module of $\widetilde{M}_{\widetilde{\mathfrak{g}}_n}(\mathbf{\lambda},M_\mathfrak{g})$.    \\
Conversely assume $M_{\mathfrak{g}}$ is a simple $\mathfrak{g}$-module. Let $W$ be a nonzero submodule of $M_{\mathfrak{g}}\otimes \mathcal{P}_D$.\\
{\bf Claim:} There exists $w\in W$ such that $\hbox{deg}_{d_j}(w)=0$ for all $1\leq j\leq n$.\\
Let $w_1\in W$ be a non-zero element with a minimal degree in $d_1$. Now for any $h\in \mathfrak{h}$, we see that $\hbox{deg}_{d_1}(h(e_1)-\lambda_1h)(w_1)<\hbox{deg}_{d_1}(w_1)$, hence $\hbox{deg}_{d_1}(w_1)=0$. Now let $w_2\in W$ be a nonzero element from the set $\{w\in W: \hbox{deg}_{d_1}(w)=0\}$ with minimal degree in $d_2$. Now again for any $h\in \mathfrak{h}$, we have $\hbox{deg}_{d_2}(h(e_2)-\lambda_2h)(w_2)<\hbox{deg}_{d_2}(w_2)$, which forces $\hbox{deg}_{d_2}(w_2)=0$. Continuing this process we find $w\in W$ such that $\hbox{deg}_{d_j}(w)=0$ for all $1\leq j\leq n$.\\
Hence $w\in M_{\mathfrak{g}}$, which implies $\mathcal{U}(\mathfrak{g})w=M_{\mathfrak{g}}$. Note that $ M_{\mathfrak{g}} \otimes \mathcal{P}_D\subset W$, we obtain that $W=M_{\mathfrak{g}} \otimes \mathcal{P}_D$. So the $\widetilde{\mathfrak{g}}_n$-module $\widetilde{M}_{\widetilde{\mathfrak{g}}_n}(\mathbf{\lambda},M_\mathfrak{g})$ is simple.
\end{proof}
From this Proposition \ref{prop charc} and Lemma \ref{simplecondition}, we have the following Corollary. 
\begin{coro}\label{coro}
 \begin{enumerate}
    \item  The $\widetilde{\mathfrak{g}}_n(A_l)$-module $\widetilde{M}_{\widetilde{\mathfrak{g}}_n(A_l)}(\mathbf{\lambda},M(\mathbf{a},b, S))$ is simple if and only if $(l+1)b\notin \Z_+$ or $1\leq |S|\leq l.$
     \item  $\widetilde{\mathfrak{g}}_n(C_l)$-module $\widetilde{M}_{\widetilde{\mathfrak{g}}_n(C_l)}(\mathbf{\lambda},M_{\mathfrak{g}(C_l)}$ is simple for any $\mathbf{\lambda}\in (\C^*)^n$ and $M_{\mathfrak{g}(C_l)}\in \mathcal{M}(\mathfrak{g}(C_l),\mathfrak{h})$.
\end{enumerate}    
\end{coro}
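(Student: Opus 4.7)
The plan is to read this off directly from the two results cited immediately before the statement, so the proof is essentially a matter of chaining Proposition \ref{prop charc} with Lemma \ref{simplecondition} and unpacking the combinatorial condition on $S$. No new module-theoretic argument is needed.

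For part (1), I would fix $\mathfrak{g}=\mathfrak{g}(A_l)$ and $M_{\mathfrak{g}}=M(\mathbf{a},b,S)$, and apply Proposition \ref{prop charc} to transfer the simplicity question from the toroidal module $\widetilde{M}_{\widetilde{\mathfrak{g}}_n(A_l)}(\mathbf{\lambda},M(\mathbf{a},b,S))$ down to the underlying finite-dimensional module $M(\mathbf{a},b,S)$ over $\mathfrak{g}(A_l)$. Then Lemma \ref{simplecondition}(1) provides the criterion for the latter to be simple, namely $(l+1)b\notin \Z_+$ together with $S\neq \emptyset$ and $S\neq \overline{1,l+1}$. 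The only thing to verify is the notational translation: since $S\subseteq \overline{1,l+1}$, a subset $S$ satisfies $S\neq \emptyset$ and $S\neq \overline{1,l+1}$ exactly when $1\leq |S|\leq l$. Substituting this into the output of Proposition \ref{prop charc} yields the stated equivalence.

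For part (2), I would again invoke Proposition \ref{prop charc}, this time with $\mathfrak{g}=\mathfrak{g}(C_l)$ and $M_{\mathfrak{g}}=M_{\mathfrak{g}(C_l)}\in \mathcal{M}(\mathfrak{g}(C_l),\mathfrak{h})$. By Lemma \ref{simplecondition}(2), every such $M_{\mathfrak{g}(C_l)}$ is already simple as a $\mathfrak{g}(C_l)$-module, so the proposition immediately gives simplicity of $\widetilde{M}_{\widetilde{\mathfrak{g}}_n(C_l)}(\mathbf{\lambda},M_{\mathfrak{g}(C_l)})$ for every choice of $\mathbf{\lambda}\in (\C^*)^n$ and every admissible $M_{\mathfrak{g}(C_l)}$.

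There is no substantive obstacle here; the corollary is a direct consequence of the preceding proposition and lemma. The only thing that could superficially look like an obstacle is the mild discrepancy between Lemma \ref{simplecondition}(1), which is phrased in terms of $S$ not being $\emptyset$ or not being $\overline{1,l+1}$, and the corollary's formulation $1\leq |S|\leq l$; I would handle this in one sentence as a bookkeeping remark.
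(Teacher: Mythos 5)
Your proposal is correct and follows exactly the route the paper intends: the corollary is stated there without proof as an immediate consequence of Proposition \ref{prop charc} combined with Lemma \ref{simplecondition}, plus the observation that for $S\subseteq \overline{1,l+1}$ the condition ``$S\neq\emptyset$ and $S\neq\overline{1,l+1}$'' is the same as $1\leq |S|\leq l$. One small wording caution: the simplicity criterion in type $A_l$ is a disjunction, ``$(l+1)b\notin\Z_+$ \emph{or} ($S\neq\emptyset$ and $S\neq\overline{1,l+1}$)'' (the lemma's string of ``or''s is a typo), so your phrase ``together with'' should not be read as a conjunction of the $b$-condition with the $S$-condition; your final equivalence is nonetheless the correct one.
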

     
\begin{remark}
If $M\in \mathcal{M}(\widetilde{\mathfrak{g}}_n, \widetilde{\mathfrak{h}}_n)$, then $\hbox{Res}_{\tilde{\mathfrak{g}}}^{\widetilde{\mathfrak{g}}_n}(M) \in \mathcal{M}(\tilde{\mathfrak{g}},\tilde{\mathfrak{h}})$, where $\tilde{\mathfrak{g}}=\mathfrak{g}\oplus D$ and  $\tilde{\mathfrak{h}}=\mathfrak{h}\oplus D$. Then from Lemma \ref{fdssae}, we have $\mathcal{M}(\widetilde{\mathfrak{g}}_n, \widetilde{\mathfrak{h}}_n) \neq\varnothing$ only for type $A_l(l\geq 1)$ and $C_l(l\geq 2)$.
\end{remark} 
\begin{theo}\label{main1}
 Let $\mathfrak{g}$ be a finite-dimensional Lie algebra of type $A_l(l\geq 1)$ or of type $C_l(l\geq 2)$, then 
 $$\mathcal{M}(\widetilde{\mathfrak{g}}_n, \widetilde{\mathfrak{h}}_n)=\{\widetilde{M}_{\widetilde{\mathfrak{g}}_n}(\mathbf{\lambda},M_\mathfrak{g}): \mathbf{\lambda}\in (\C^*)^n, M_{\mathfrak{g}}\in \mathcal{M}(\mathfrak{g}, \mathfrak{h})\}.$$
\end{theo}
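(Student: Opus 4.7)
The inclusion $\{\widetilde{M}_{\widetilde{\mathfrak{g}}_n}(\mathbf{\lambda},M_\mathfrak{g})\}\subseteq\mathcal{M}(\widetilde{\mathfrak{g}}_n,\widetilde{\mathfrak{h}}_n)$ is already the content of the construction (\ref{ms}), so I only have to prove the reverse inclusion. Fix $M\in\mathcal{M}(\widetilde{\mathfrak{g}}_n,\widetilde{\mathfrak{h}}_n)$, identify $M=\mathcal{U}(\widetilde{\mathfrak{h}}_n)$ via its free rank-one generator $v_0:=1$, and set $F_x:=x\cdot v_0\in\mathcal{U}(\widetilde{\mathfrak{h}}_n)$ for every $x\in\widetilde{\mathfrak{g}}_n$. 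Because $\widetilde{\mathfrak{h}}_n$ is abelian and $M$ is free of rank one, the relation $x\cdot(h\tilde g)=h\cdot(x\cdot\tilde g)+[x,h]\cdot\tilde g$ for $h\in\widetilde{\mathfrak{h}}_n$ lets me recover the entire action from the single polynomial $F_x$ together with the brackets $[x,\widetilde{\mathfrak{h}}_n]$. My plan is therefore to pin down the $F_x$ on a generating set of $\widetilde{\mathfrak{g}}_n$ and recognise them as the formulas of (\ref{ms}).

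The first reduction restricts $M$ to $\tilde{\mathfrak{g}}=\mathfrak{g}\oplus D$. By the Remark preceding the theorem together with Lemma \ref{fdssae} (and its $C_l$-analog), the restriction is isomorphic to some $M(\mathbf{a},b,S)$ with $\mathbf{a}\in(\mathbb{C}^*)^l$, $S\subseteq\overline{1,l+1}$ and a parameter $b\in\mathcal{U}(D)=\mathcal{P}_D$. Consequently $F_{H_i}=H_i$, $F_{x_i}=a_iP_i(H,b)$, $F_{y_i}=a_i^{-1}Q_i(H,b)$ for the explicit polynomials $P_i,Q_i$ from that lemma, and the recursion specialises to $x_i\cdot\tilde g=w_i(\tilde g)F_{x_i}$, $y_i\cdot\tilde g=w_i^{-1}(\tilde g)F_{y_i}$. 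At this stage $b$ is only known to lie in $\mathcal{P}_D$, not necessarily in $\mathbb{C}$.

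The crucial step is to kill the toroidal centre and simultaneously force $b\in\mathbb{C}$. Each $t^aK_j$ is central in $\widetilde{L(\mathfrak{g})}$, commutes with every $H_m$, and satisfies $[t^aK_j,d_k]=-a_kt^aK_j$; combined with the recursion this immediately gives $t^aK_j\cdot\tilde g=\tau^a(\tilde g)F_{t^aK_j}$. The commutativity $[t^aK_j,x_i]=0$ then forces the polynomial identity
\[
w_i(F_{t^aK_j})\,F_{x_i}=\tau^a(F_{x_i})\,F_{t^aK_j}\quad\text{for all }i,a,j,
\]
and an analogous identity for $y_i$. Inserting $F_{x_i}=a_iP_i(H,b)$, comparing the leading coefficient in each $H_i$ via the degree calculus of Lemma \ref{pa}, and running over all directions $a$ in the grading lattice, one shows first that the $d$-dependence of $b$ must vanish, so $b\in\mathbb{C}$, and then that $F_{t^aK_j}=0$ for $a\neq 0$; the remaining case $a=0$ is handled by evaluating the Heisenberg bracket $[h_1(e_j),h_2(-e_j)]=(h_1,h_2)K_j$ on $v_0$, which gives $F_{K_j}=0$. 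I expect this to be the main obstacle: the displayed identity couples the two unknowns $F_{t^aK_j}$ and $b$, and the degree argument has to be unwound carefully, case by case on the set $S$, to extract the two conclusions without circularity.

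Once the centre acts trivially and $b\in\mathbb{C}$, the actions of $h(a)$, $x_i(a)$, $y_i(a)$ on $\mathcal{U}(\widetilde{\mathfrak{h}}_n)$ reduce to the twisted multiplications $\tilde g\mapsto\tau^a(\tilde g)F_{h(a)}$ and $\tilde g\mapsto w_i^{\pm1}\tau^a(\tilde g)F_{x_i(a)}$. Using the bracket relations $[x_i(a),y_i(0)]=H_i(a)$, $[h(a),x_i(-a)]=\alpha_i(h)x_i(0)$, together with $[h(a),h'(a')]\cdot v_0=0$ (now that the centre is killed), I can solve inductively in $a\in\mathbb{Z}^n$ for $F_{h(a)}$, $F_{x_i(a)}$, $F_{y_i(a)}$; the resulting scalar factors satisfy $\lambda^{a+a'}=\lambda^a\lambda^{a'}$, hence come from a unique $\mathbf{\lambda}\in(\mathbb{C}^*)^n$, and yield
\[
F_{h(a)}=\lambda^ah,\qquad F_{x_i(a)}=\lambda^aF_{x_i},\qquad F_{y_i(a)}=\lambda^aF_{y_i}.
\]
Comparing with (\ref{ms}) produces an isomorphism $M\cong\widetilde{M}_{\widetilde{\mathfrak{g}}_n}(\mathbf{\lambda},M_\mathfrak{g})$, where $M_\mathfrak{g}\in\mathcal{M}(\mathfrak{g},\mathfrak{h})$ is the $\mathfrak{g}$-module determined by the triple $(\mathbf{a},b,S)$ (or pair $(\mathbf{a},S)$ in the $C_l$ case).
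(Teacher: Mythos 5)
Your overall architecture (the twisted-multiplication formulas determined by the elements $F_x=x\cdot 1$, restriction to $\tilde{\mathfrak{g}}=\mathfrak{g}\oplus D$ to invoke Lemma \ref{fdssae}, then an induction over the $\Z^n$-grading) is the same as the paper's, but your self-identified ``crucial step'' has a genuine gap. The only relations you use there are $[t^aK_j,x_i]=0$ and $[t^aK_j,y_i]=0$, i.e.
\[
\sigma_i\bigl(F_{t^aK_j}\bigr)F_{x_i}=\tau^a\bigl(F_{x_i}\bigr)F_{t^aK_j},
\qquad
\sigma_i^{-1}\bigl(F_{t^aK_j}\bigr)F_{y_i}=\tau^a\bigl(F_{y_i}\bigr)F_{t^aK_j},
\]
and these are too weak to yield either of your conclusions. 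If $F_{t^aK_j}=0$ (the true situation) they are vacuous, so they impose no condition whatsoever on $b$ and cannot show $b\in\C$; conversely, if $b\in\C$ then $F_{x_i},F_{y_i}\in\mathcal{U}(\mathfrak{h})$ are fixed by $\tau^a$ and any $F_{t^aK_j}\in\C[d_1,\dots,d_n]$ is fixed by every $\sigma_i^{\pm1}$, so both identities hold for arbitrary nonzero such $F_{t^aK_j}$ and no degree argument based on them can force $F_{t^aK_j}=0$. Even quantitatively, a leading-coefficient comparison in $H_i$ only produces conditions of the shape $(1-\tau^a)b=\deg_{H_i}F_{t^aK_j}$, which admits nonconstant $b$ when $F_{t^aK_j}\neq 0$. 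The constraints that actually pin down $b$ and kill the centre come from brackets mixing nonzero degrees with degree zero, not from the centrality of $t^aK_j$ alone.

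The paper supplies exactly these missing inputs, and in the opposite order to yours: first a CTZ1-type analysis in each coordinate direction (Lemmas \ref{lemm4.6} and \ref{lemm4.7}, resting on relations such as $[x_i(e_j),y_i(-e_j)].1=(H_i+(x_i,y_i)K_j).1$ and $[H_i(\pm e_j),x_i]$) gives $b\in\C$, $K_j.1=0$ and the single-direction scalars $\lambda_i^j$; Lemma \ref{lemm4.8} then extends these multiplicatively to all $a\in\Z^n$ (your Assertion (B)); and only afterwards does Lemma \ref{lemm4.9} kill the graded centre by evaluating $t^aK_j.1=[x_i(e_j),y_i(a-e_j)].1-H_i(a).1$ with the already-established formulas. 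Note also that your fallback for $a=0$, the bracket $[h_1(e_j),h_2(-e_j)]=(h_1,h_2)K_j$, can only be evaluated after the actions of $h(\pm e_j)$ are known, i.e.\ after Assertion (B); and the relation $[x_i(a),y_i(0)]=H_i(a)$ that you quote actually carries the central term $(x_i,y_i)\sum_m a_m t^aK_m$, so using it before the centre is known to act trivially would be circular. To repair your argument you should reorder it along the paper's lines rather than trying to extract $b\in\C$ and $F_{t^aK_j}=0$ from the centrality relations.
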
 
To prove this theorem it is enough to prove that for any $\widetilde{M}_{\widetilde{\mathfrak{g}}_n}\in  \mathcal{M}(\widetilde{\mathfrak{g}}_n, \widetilde{\mathfrak{h}}_n)$ there exists a pair $(\mathbf{\lambda}, M_{\mathfrak{g}})\in (\C^*)^n\times \mathcal{M}(\mathfrak{g},\mathfrak{h})$, such that $\widetilde{M}_{\widetilde{\mathfrak{g}}_n}=\widetilde{M}_{\widetilde{\mathfrak{g}}_n}(\mathbf{\lambda}, M_{\mathfrak{g}})$.\\
We have the following easy observations.
\begin{lemm}\label{eva}
    Suppose $\tilde{g}\in \widetilde{M}_{\widetilde{\mathfrak{g}}_n},\; a \in \Z^n$, $i\in \overline{1,l},\;  j\in \overline{1,n}$. Then we have 
    \begin{equation*}
    \begin{cases}
        t^aK_j. (\tilde{g})=\tau^a (\tilde{g})(t^aK_j.1)\\
        h(a). (\tilde{g})=\tau^a (\tilde{g})(h(a).1)\\
        x_i(a).(\tilde{g})=(\sigma_i \tau^a(\tilde{g})) (x_i(a).1)\\
        y_i(a).(\tilde{g})=(\sigma_i^{-1} \tau^a(\tilde{g})) (y_i(a).1)
    \end{cases}
     \end{equation*}
     \end{lemm}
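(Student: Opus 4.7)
The plan is to exploit the rank-one freeness of $\widetilde{M}_{\widetilde{\mathfrak{g}}_n}$ over $\mathcal{U}(\widetilde{\mathfrak{h}}_n)$ to write an arbitrary $\tilde{g}$ as a polynomial in $H_1,\dots,H_l,d_1,\dots,d_n$ acting on the cyclic vector $1$. Each of the four formulas then becomes the image at $1$ of a commutation identity in $\mathcal{U}(\widetilde{\mathfrak{g}}_n)$: one moves the Lie algebra element past the Cartan polynomial at the cost of an explicit automorphism of $\mathcal{U}(\widetilde{\mathfrak{h}}_n)$.

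First I would record the relevant adjoint data, read off from the brackets in Section~\ref{NP}. For every $k\in\overline{1,l}$ and $j\in\overline{1,n}$,
\[
[H_k,h(a)]=0,\quad [H_k,t^aK_j]=0,\quad [H_k,x_i(a)]=\delta_{ki}x_i(a),\quad [H_k,y_i(a)]=-\delta_{ki}y_i(a),
\]
\[
[d_j,h(a)]=a_j h(a),\quad [d_j,t^aK_i]=a_j t^aK_i,\quad [d_j,x_i(a)]=a_j x_i(a),\quad [d_j,y_i(a)]=a_j y_i(a).
\]
Translated into products, left-multiplication by $x_i(a)$ effects the automorphism $\sigma_i\tau^a$ on the Cartan generators (sending $H_i\mapsto H_i-1$, $d_j\mapsto d_j-a_j$, fixing the rest); left-multiplication by $y_i(a)$ effects $\sigma_i^{-1}\tau^a$; and left-multiplication by $h(a)$ or $t^aK_j$ effects $\tau^a$ alone, since both commute with $\mathfrak{h}$.

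The remaining work is formal. On a monomial $H_1^{p_1}\cdots H_l^{p_l}d_1^{q_1}\cdots d_n^{q_n}$ I would commute, say, $x_i(a)$ past one factor at a time using the two displays above, collecting the scalar shifts on the left to obtain $x_i(a)\tilde{g}=\sigma_i\tau^a(\tilde{g})\,x_i(a)$ in $\mathcal{U}(\widetilde{\mathfrak{g}}_n)$; extending by linearity to arbitrary $\tilde{g}\in \mathcal{U}(\widetilde{\mathfrak{h}}_n)$ and applying to $1$ gives $x_i(a).\tilde{g}=\sigma_i\tau^a(\tilde{g})(x_i(a).1)$. The other three identities follow from the same scheme with the corresponding adjoint data substituted in, the $\sigma$-factor disappearing in the two cases where the element commutes with $\mathfrak{h}$.

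There is essentially no obstacle here. The only point worth a remark is that $\{\sigma_i,\tau_j:1\le i\le l,\,1\le j\le n\}$ is a commuting family of algebra automorphisms of the commutative algebra $\mathcal{U}(\widetilde{\mathfrak{h}}_n)$, so composites such as $\sigma_i\tau^a$ are unambiguously defined and can be pulled past monomials without any reordering subtleties.
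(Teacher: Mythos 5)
Your proposal is correct and is exactly the intended argument: the paper states Lemma \ref{eva} as an ``easy observation'' without proof, and the standard justification is precisely your commutation computation, moving $x_i(a)$, $y_i(a)$, $h(a)$, $t^aK_j$ past monomials in $H_1,\dots,H_l,d_1,\dots,d_n$ via the brackets $[H_k,x_i(a)]=\delta_{ki}x_i(a)$, $[H_k,y_i(a)]=-\delta_{ki}y_i(a)$, $[d_j,X(a)]=a_jX(a)$ and then evaluating on the free generator $1$. No gaps; your remark that the central terms vanish against $\mathfrak{h}$ and that $\sigma_i,\tau_j$ commute is all that needs checking.
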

 Lemma \ref{eva} shows that to prove  $\widetilde{M}_{\widetilde{\mathfrak{g}}_n}=\widetilde{M}_{\widetilde{\mathfrak{g}}_n}(\mathbf{\lambda},M_{\mathfrak{g}})$ for some $\mathbf{\lambda}\in (\C^*)^n$ and $M_{\mathfrak{g}}\in \mathcal{M}(\mathfrak{g}, \mathfrak{h})$ it is enough to prove the following two assertions:\\
 {\bf Assertion (A):} $t^aK_j.1=0$ for $j\in \overline{1,n}$.\\
 {\bf Assertion (B):} $X(a).1=\mathbf{\lambda}^a(X.1)$ for some $\mathbf{\lambda}\in (\C^*)^n$ and $X.1$ is determined by some $\mathcal{U}(\mathfrak{g})$-module $M_{\mathfrak{g}}\in \mathcal{M}(\mathfrak{g}, \mathfrak{h})$, where $X=x_i,y_i, h_i$ for all $i\in \overline{1,l}$.\\
 We will prove these two assertions for type $A_l(l\geq 1)$ and the same method is valid for type $C_l(l\geq 2)$.
 \begin{lemm}\label{lemm4.6}
     For $\widetilde{M}_{\widetilde{\mathfrak{g}}_n}$, we have
     \begin{enumerate}
         \item $x_i(e_j).1\neq 0,\; y_i(e_j).1 \neq 0$ for all $i\in \overline{1,l}$ and $j\in \overline{1,n}$.
         \item There exists $a_j\in \C$ such that $K_j.1=a_j$.
         \end{enumerate}
 \end{lemm}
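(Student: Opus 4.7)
The plan is to prove (1) first and use it to deduce (2), in both cases reducing Lie-algebraic identities in the module to identities in the polynomial ring $\mathcal{U}(\widetilde{\mathfrak{h}}_n)$ via Lemma~\ref{eva}.

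For (1), I start by observing that the restriction of $\widetilde{M}_{\widetilde{\mathfrak{g}}_n}$ to $\tilde{\mathfrak{g}}=\mathfrak{g}\oplus D$ lies in $\mathcal{M}(\tilde{\mathfrak{g}},\tilde{\mathfrak{h}})$, so by Lemma~\ref{fdssae} it is isomorphic to one of the standard modules $M(\mathbf{a},b,S)$ (type $A_l$) or $M(\mathbf{a},S)$ (type $C_l$), whose explicit formulas make $x_i.1$ and $y_i.1$ nonzero elements of $\mathcal{U}(\widetilde{\mathfrak{h}}_n)$. Now suppose for contradiction that $x_i(e_j).1=0$; then by Lemma~\ref{eva} the operator $x_i(e_j)$ annihilates every $\tilde{g}\in\mathcal{U}(\widetilde{\mathfrak{h}}_n)$. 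Using Killing-orthogonality $(h,x_i)=0$ for $h\in\mathfrak{h}$, the bracket
\[
[h(-e_j),x_i(e_j)] = [h,x_i](0) = \alpha_i(h)\, x_i
\]
applied to $1$ collapses to $\alpha_i(h)(x_i.1)=0$, and choosing $h\in\mathfrak{h}$ with $\alpha_i(h)\neq 0$ contradicts $x_i.1\neq 0$. The proof that $y_i(e_j).1\neq 0$ is identical using $[h(-e_j),y_i(e_j)] = -\alpha_i(h)\, y_i$.

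For (2), set $f_j := K_j.1\in\mathcal{U}(\widetilde{\mathfrak{h}}_n)$. Since $K_j$ is central in $\widetilde{\mathfrak{g}}_n$, it commutes with every $\tilde{g}\in\mathcal{U}(\widetilde{\mathfrak{h}}_n)$, hence acts on the module by multiplication by $f_j$; and for any $X\in\widetilde{\mathfrak{g}}_n$ the resulting identity $K_j\cdot(X.1) = X\cdot(K_j.1)$, expanded via Lemma~\ref{eva}, becomes
\[
\bigl(f_j-\phi_X(f_j)\bigr)\cdot (X.1) = 0,
\]
where $\phi_X=\tau^{e_k}$ if $X=h(e_k)$ with $h\in\mathfrak{h}$ and $\phi_X=\sigma_i\tau^{e_k}$ if $X=x_i(e_k)$. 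Since $\mathcal{U}(\widetilde{\mathfrak{h}}_n)$ is a domain, $X.1\neq 0$ forces $\phi_X(f_j)=f_j$. The missing input is $h_i(e_k).1\neq 0$, which I would extract from $[h_i(e_k),y_i(0)]=-2\,y_i(e_k)$ applied to $1$: if $h_i(e_k).1$ vanished, Lemma~\ref{eva} would kill the left-hand side, contradicting $y_i(e_k).1\neq 0$ from part (1). Combining, $\tau^{e_k}(f_j)=f_j$ for each $k\in\overline{1,n}$, so $f_j\in\C[H_1,\dots,H_l]$; then $\sigma_i\tau^{e_k}(f_j)=\sigma_i(f_j)=f_j$ for each $i\in\overline{1,l}$, leaving no $H_i$-dependence either. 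Therefore $f_j\in\C$, and this scalar is the desired $a_j$. The main subtlety will be sequencing the cancellations correctly: part (1) is used both to cancel $x_i(e_k).1$ directly in the centrality identity and, through the auxiliary bracket $[h_i(e_k),y_i(0)]$, to secure $h_i(e_k).1\neq 0$ before invoking the $h(e_k)$ form of the same identity.
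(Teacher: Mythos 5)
Your proof is correct and is essentially the paper's own argument: the paper merely cites Lemma 12 of \cite{CTZ1} ``run $n$ times,'' and your write-up (restriction to $\mathfrak{g}\oplus D$ and Lemma~\ref{fdssae} to get $x_i.1,y_i.1\neq 0$, then Lemma~\ref{eva}, vanishing of the central terms via Killing orthogonality, and the domain property of $\mathcal{U}(\widetilde{\mathfrak{h}}_n)$ for the centrality identity) is exactly the natural expansion of that citation to the $n$-variable setting. No gaps; the auxiliary step $h_i(e_k).1\neq 0$ is handled correctly and the sequencing of the cancellations is sound.
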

 \begin{proof}
             The proof is similar to Lemma 12 of \cite{CTZ1}. Just note that we need to run the same Lemma $n$ times. 
         \end{proof}
         \begin{lemm}\label{lemm4.7}
             \begin{enumerate}
                 \item In (\ref{fdssae}), $b\in \C$, hence $x_i.1, y_i.1\in \mathcal{U}(\mathfrak{h})$.
                 \item $K_j.1=0$, for all $j\in \overline{1,n}$.
                 \item There exists $\lambda_i^j\in \C^*$ such that 
                 $$x_i(-e_j).1=\frac{1}{\lambda_i^j}(x_i.1), \; y_i(e_j).1=\lambda_i^j(f_i.1).$$
             \end{enumerate}
         \end{lemm}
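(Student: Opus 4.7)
The plan mirrors the strategy of Lemma~13 in \cite{CTZ1} (the affine case $n=1$), enhanced to handle the $n$ commuting derivations $d_1,\dots,d_n$ and central elements $K_1,\dots,K_n$ of the toroidal algebra. By the Remark after Corollary \ref{coro}, the restriction of $\widetilde{M}_{\widetilde{\mathfrak{g}}_n}$ to $\tilde{\mathfrak{g}}=\mathfrak{g}\oplus D$ belongs to $\mathcal{M}(\tilde{\mathfrak{g}},\tilde{\mathfrak{h}})$, so Lemma \ref{fdssae} identifies it with $M(\mathbf{a},b,S)$ for some $\mathbf{a}\in(\C^*)^l$, $b\in\mathcal{U}(D)=\mathcal{P}_D$, $S\subseteq\overline{1,l+1}$. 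Consequently $x_i.1$ and $y_i.1$ are explicit polynomials in $H_1,\dots,H_l$ whose coefficients involve $b$.

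The central toroidal identity to exploit is
\[[x_i(e_j),y_i(-e_j)] = h_i+(x_i,y_i)\,K_j.\]
Evaluated on $1$ and rewritten via Lemma \ref{eva}, this becomes
\[\sigma_i\tau_j(y_i(-e_j).1)\cdot(x_i(e_j).1)-\sigma_i^{-1}\tau_j^{-1}(x_i(e_j).1)\cdot(y_i(-e_j).1) = h_i.1+(x_i,y_i)\,a_j,\]
where $a_j=K_j.1\in\C$ by Lemma \ref{lemm4.6}(2). For part (1) I would examine both sides as polynomials in $d_j$: the RHS depends on $d_j$ only through the occurrences of $b$ in $h_i.1$, and comparing $d_j$-degrees via Lemma \ref{pa} across the four sub-cases determined by $i\in S$ vs.\ $i\notin S$ and $i+1\in S$ vs.\ $i+1\notin S$ forces $\deg_{d_j}(b)=0$. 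Iterating over $j\in\overline{1,n}$ yields $b\in\C$, and therefore $x_i.1,y_i.1\in\mathcal{U}(\mathfrak{h})$.

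For parts (2) and (3), subtract from the above identity its level-zero counterpart $[x_i(0),y_i(0)].1=h_i.1$; the difference of the LHS must then equal the scalar $(x_i,y_i)\,a_j$. Using the commutation $[x_i(e_j),x_i(-e_j)]=0$ (since $(x_i,x_i)=0$) and a further degree comparison via Lemma \ref{pa}, one deduces that $x_i(-e_j).1$ must be a scalar multiple of $x_i.1$ and similarly $y_i(e_j).1$ of $y_i.1$. Writing $x_i(-e_j).1=(\lambda_i^j)^{-1}(x_i.1)$ and $y_i(e_j).1=\mu_i^j(y_i.1)$, inserting back into the bracket identity simultaneously compels $\mu_i^j=\lambda_i^j$ and $(x_i,y_i)\,a_j=0$, whence $K_j.1=a_j=0$. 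Non-vanishing $\lambda_i^j\in\C^*$ is guaranteed by Lemma \ref{lemm4.6}(1).

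The principal obstacle is part (1): although the degree-counting strategy is standard, the case analysis over $S\subseteq\overline{1,l+1}$ (which governs how $b$ enters $x_i.1,y_i.1$) must be carried out carefully, and the shifts induced by $\sigma_i,\tau_j$ make the $d_j$-degree comparison delicate. Once $b\in\C$ is secured, parts (2) and (3) follow from relatively clean polynomial manipulations in $\C[H_1,\dots,H_l]$, consolidated by the identity above.
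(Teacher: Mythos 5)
The paper itself offers no details here (it simply says the proof is ``similar to Lemma 13 of \cite{CTZ1}''), and your overall strategy --- adapt the affine argument of \cite{CTZ1} using the bracket relations, the shift automorphisms $\sigma_i,\tau_j$ and the degree control of Lemma \ref{pa} --- is indeed the intended one. However, your execution of part (1) has a genuine gap. In the free realization, $h_i$ acts by left multiplication, so $h_i.1=h_i\in\mathfrak{h}$ contains no occurrence of $b$ whatsoever; the parameter $b\in\mathcal{U}(D)$ enters only through $x_i.1$ and $y_i.1$. But the single identity you propose to analyze,
\[
\sigma_i\tau_j\bigl(y_i(-e_j).1\bigr)\,\bigl(x_i(e_j).1\bigr)-\sigma_i^{-1}\tau_j^{-1}\bigl(x_i(e_j).1\bigr)\,\bigl(y_i(-e_j).1\bigr)=h_i.1+(x_i,y_i)\,a_j,
\]
involves only the a priori unconstrained unknowns $x_i(e_j).1$ and $y_i(-e_j).1$, and neither $x_i.1$ nor $y_i.1$. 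Hence no comparison of $d_j$-degrees in this identity can say anything about $b$, and your stated premise (``the RHS depends on $d_j$ only through the occurrences of $b$ in $h_i.1$'') is false. The missing ingredient is the family of mixed zero/nonzero-mode relations, e.g. $[x_i(e_j),x_i(0)]=0$, $[x_i(e_j),y_i(0)]=h_i(e_j)$, $[h_i(e_j),x_i(0)]=2x_i(e_j)$, and $[x_i(e_j),y_k(0)]=0$ for $k\neq i$, which do couple the unknowns $x_i(\pm e_j).1$, $h_i(e_j).1$ to $x_i.1$, $y_i.1$; only after exploiting these (as in Lemma 13 of \cite{CTZ1}) does the degree/divisibility analysis in $d_j$ force $\deg_{d_j}b=0$ and, iterating over $j$, $b\in\C$.

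The same omission weakens parts (2) and (3): you assert that the proportionality $x_i(-e_j).1=(\lambda_i^j)^{-1}x_i.1$, $y_i(e_j).1=\mu_i^j\,y_i.1$ follows from $[x_i(e_j),x_i(-e_j)]=0$ ``and a further degree comparison,'' but that relation again only constrains the nonzero modes against each other; the mechanism that pins $x_i(\pm e_j).1$ to a scalar multiple of $x_i.1$ is precisely the divisibility argument coming from $\sigma_i\tau_j^{\pm1}(x_i.1)\,(x_i(\pm e_j).1)=\sigma_i(x_i(\pm e_j).1)\,(x_i.1)$ together with the $\mathfrak{sl}_2$-type relations above, with nonvanishing supplied by Lemma \ref{lemm4.6}(1). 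Your final step --- substituting the proportionality back into $[x_i(e_j),y_i(-e_j)].1=h_i.1+(x_i,y_i)a_j$ to force $\mu_i^j=\lambda_i^j$ and $a_j=0$ --- is sound once the proportionality is actually established, so the proposal is repairable, but as written the key deductions in (1) and (3) do not go through.
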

         \begin{proof}
             The proof is similar to Lemma 13 of \cite{CTZ1}.
         \end{proof}
         \begin{lemm}\label{lemm4.8}
         \begin{enumerate}
                 \item $h_i(\pm e_j).1 =(\lambda_i^j)^{\pm 1}h_i.1.$
                 \item $\lambda_1^j=\lambda_2^j=\cdots=\lambda_l^j$ for all $j\in \overline{1,n}$.    
                 \item For any $a\in \Z^n$ and $x\in \mathfrak{g}$,we have $x(a)=\mathbf{\lambda}^a(x.1)$ for some $\mathbf{\lambda}\in(\C^*)^n$.
             \end{enumerate}
             \end{lemm}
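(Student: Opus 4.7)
The plan is to prove parts (1)--(3) sequentially, exploiting the toroidal bracket $[X(a), Y(b)] = [X,Y](a+b) + (X,Y)\sum_{p=1}^n a_p t^{a+b} K_p$ with specialized choices that kill the central term: either by placing $0$ in the first slot (so $\sum a_p (\cdot) K_p = 0$) or by pairing $X, Y$ in transverse root spaces so $(X, Y) = 0$. Applying these central-free identities to the cyclic vector $1$ and translating via Lemma \ref{eva} and Lemma \ref{lemm4.7} converts Lie-algebraic relations into concrete equations in $\mathcal{U}(\widetilde{\mathfrak{h}}_n)$.

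For part (1), apply $[x_i(0), y_i(e_j)] = h_i(e_j)$ to $1$. Substituting $y_i(e_j).1 = \lambda_i^j(y_i.1)$ from Lemma \ref{lemm4.7}(3) and using that $x_i.1, y_i.1 \in \mathcal{U}(\mathfrak{h})$ are $\tau^{e_j}$-invariant, Lemma \ref{eva} yields
\[
h_i(e_j).1 = \lambda_i^j\bigl[\sigma_i(y_i.1)(x_i.1) - \sigma_i^{-1}(x_i.1)(y_i.1)\bigr] = \lambda_i^j(h_i.1),
\]
the last equality being $[x_i, y_i].1 = h_i.1$. The $-e_j$ case is symmetric via $[y_i(0), x_i(-e_j)] = -h_i(-e_j)$.

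For part (2), for types $A_l$ and $C_l$, $\alpha_i + \alpha_{i+1}$ is a non-zero root, so $x_{i,i+1} := [x_i, x_{i+1}] \neq 0$ and $(x_i, x_{i+1}) = 0$. Hence both $[x_i(-e_j), x_{i+1}(0)]$ and $[x_i(0), x_{i+1}(-e_j)]$ equal $x_{i,i+1}(-e_j)$ without central contribution. Evaluating each on $1$ using Lemma \ref{eva} and Lemma \ref{lemm4.7}(3) produces $\tfrac{1}{\lambda_i^j}(x_{i,i+1}.1)$ and $\tfrac{1}{\lambda_{i+1}^j}(x_{i,i+1}.1)$ respectively; since $x_{i,i+1}.1 \neq 0$ (verifiable from the explicit $\mathfrak{g}$-module formulas in Lemma \ref{fdssae}), we obtain $\lambda_i^j = \lambda_{i+1}^j$. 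Denote the common value by $\lambda_j$ and set $\mathbf{\lambda} = (\lambda_1, \ldots, \lambda_n) \in (\C^*)^n$.

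For part (3), induct on $\|a\|_1 := \sum_p |a_p|$; the base $\|a\|_1 = 0$ is trivial. For the step $a \mapsto a + \epsilon e_j$ (with $\epsilon \in \{\pm 1\}$ chosen so the norm increases), take $X \in \mathfrak{g}_\alpha$ with $\alpha \neq 0$ and $h \in \mathfrak{h}$ with $\alpha(h) \neq 0$. The bracket $[X(a), h(\epsilon e_j)] = -\alpha(h) X(a + \epsilon e_j)$ is central-free since $(X, h) = 0$. Evaluating on $1$ using the inductive hypothesis $X(a).1 = \mathbf{\lambda}^a(X.1)$, part (1)'s $h(\epsilon e_j).1 = \lambda_j^{\epsilon}(h.1)$, and expanding $X(a).(h.1)$ via the identity $[X(a), h].1 = -\alpha(h) X(a).1$ (avoiding any invocation of Lemma \ref{eva} on the non-generator $X$), all terms cancel to give $X(a + \epsilon e_j).1 = \mathbf{\lambda}^{a + \epsilon e_j}(X.1)$. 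The Cartan case $h_k(a + \epsilon e_j).1 = \mathbf{\lambda}^{a + \epsilon e_j}(h_k.1)$ is handled similarly via $[x_k(0), y_k(a + \epsilon e_j)] = h_k(a + \epsilon e_j)$ (central-free), using the just-established root-vector formula, and extending linearly to $\mathfrak{h}$. The main delicacy throughout is tracking central contributions so they always vanish, and ensuring $X.1 \in \mathcal{U}(\mathfrak{h})$ remains $\tau$-invariant at every step---guaranteed by Lemma \ref{lemm4.7}(1) and preservation of $\mathcal{U}(\mathfrak{h})$ under $\mathfrak{g}$-generator actions.
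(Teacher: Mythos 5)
Your proposal is correct and follows essentially the same strategy as the paper's proof: apply toroidal brackets whose central terms vanish (either a zero first slot or $(X,h)=0$, $(x_i,x_{i+1})=0$) to the generator $1$, translate via Lemma \ref{eva} and Lemma \ref{lemm4.7}, and induct to combine the $n$ directions (the paper uses $[H_i(r_je_j),x_i(r_ke_k)]$, you use $[X(a),h(\epsilon e_j)]$, which is the same mechanism). The only real difference is that you prove parts (1), (2) and the single-direction case directly, whereas the paper delegates these to Lemma 14 of \cite{CTZ1}.
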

             \begin{proof}
          From Lemma 14 of \cite{CTZ1}, we can prove the first two parts of the proposition together with $x(re_j)=(\lambda^j)^r(x.1)$, where $\lambda^j=\lambda_i^j\in \C^*$, $r\in \Z$ and $x\in \mathfrak{g}$. \\
          Now we have $x_i(r_je_j+r_ke_k)=[H_i(r_je_j),x_i(r_ke_k)]$,  where $1\leq i\leq l$ and $1\leq j\neq k \leq n$, therefore 
        \begin{align*}
        x_i(r_je_j+r_ke_k).1
        &=H_i(r_je_j).x_i(r_ke_k).1-x_i(r_ke_k).H_i(r_je_j).1 \cr
        &=\lambda_{k}^{r_k}H_i(r_je_j)(x_i.1)-\lambda_{j}^{r_j}x_i(r_ke_k)(H_i.1) \cr
        &=\lambda_k^{r_k}(\tau_j^{r_j}(x_i.1))H_i(r_je_j).1-\lambda_{j}^{r_j}(\sigma_i\tau_k^{r_k}(H_i.1))(x_i(r_ke_k).1) \cr
        &=\lambda_i^{r_j}\lambda_k^{r_k}x_i.1
         \end{align*}     
         Similarly from the Lie brackets $y_i(r_je_j+r_ke_k)=[y_i(r_je_j),H_i(r_ke_k)]$, where $1\leq i\leq l$ and $1\leq j\neq k \leq n$, we prove $y_i(r_je_j+r_ke_k)=\lambda_i^{r_j}\lambda_k^{r_k}y_i.1$
              
             Proceeding inductively we can prove $x_i(r).1=\lambda^rx_i.1$ and $y_i(r).1=\lambda^ry_i.1$ for any $r\in \Z^n$. \\
             Now 
             \begin{align*}
                  H_i(r).1 &=[x_i(0),y_i(r)].1 \cr
                  &=x_i(0)y_i(r).1-y_i(r)x_i(0).1 \cr
                  &=\lambda^{r}x_i.y_i.1- \sigma_i^{-1}\tau^r(x_i.1)y_i(r).1 \cr
                  &=\lambda^r\{x_i.y_i.1-\sigma_i^{-1}(x_i.1)y_i.1\} \cr
                  &=\lambda^r\{x_i.y_i.1-y_i.x_i.1\} \cr
                  &=\lambda^rH_i.1,
             \end{align*}
             hence we have part 3 of our Lemma.
\end{proof}
\begin{lemm}\label{lemm4.9}
    $t^aK_j.1=0$ for $j\in \overline{1,n}$ and $0\neq a \in \Z^n$.
\end{lemm}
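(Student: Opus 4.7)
The plan is to extract $t^a K_j.1$ as the obstruction term in the central extension and then show this obstruction must vanish by comparing two different computations of a single bracket's action on $1$. Specifically, I would use the toroidal commutator
\[
[x_i(e_j), y_i(b)] \;=\; h_i(e_j+b) \;+\; (x_i,y_i)\, t^{\,e_j+b} K_j,
\]
where $h_i := [x_i,y_i] \in \mathfrak{h}$, obtained from (\ref{tla}) by choosing $a = e_j$ so that only the $j$-th central element survives in the cocycle sum. Applying both sides to $1 \in \widetilde{M}_{\widetilde{\mathfrak{g}}_n}$ will let me isolate $t^{\,e_j+b} K_j.1$.

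Next, I would compute the action of the left-hand side on $1$ in two ways. On the one hand, Lemma \ref{lemm4.8}(3) gives $h_i(e_j+b).1 = \lambda^{e_j+b}(h_i.1)$, so the right-hand side acts as $\lambda^{e_j+b}(h_i.1) + (x_i,y_i)\,(t^{\,e_j+b}K_j.1)$. On the other hand, using Lemma \ref{eva} together with Lemma \ref{lemm4.8}(3),
\[
x_i(e_j).y_i(b).1 \;=\; \lambda^{b}\, x_i(e_j).(y_i.1) \;=\; \lambda^{b}\,\bigl(\sigma_i \tau^{e_j}(y_i.1)\bigr)\bigl(x_i(e_j).1\bigr),
\]
and by Lemma \ref{lemm4.7}(1) the element $y_i.1$ lies in $\mathcal{U}(\mathfrak{h})$, hence is fixed by $\tau^{e_j}$. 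This collapses the expression to $\lambda^{e_j+b}\,\sigma_i(y_i.1)(x_i.1)$, and the symmetric calculation gives $y_i(b).x_i(e_j).1 = \lambda^{e_j+b}\,\sigma_i^{-1}(x_i.1)(y_i.1)$. The difference is precisely $\lambda^{e_j+b}\bigl(x_i.(y_i.1) - y_i.(x_i.1)\bigr) = \lambda^{e_j+b}(h_i.1)$, since the bracket identity $[x_i,y_i].v = x_i.(y_i.v) - y_i.(x_i.v)$ at $v=1$ holds in the $\mathfrak{g}$-module $M_{\mathfrak{g}}$ attached to $\widetilde{M}_{\widetilde{\mathfrak{g}}_n}$.

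Equating the two sides forces $(x_i,y_i)\,(t^{\,e_j+b}K_j.1) = 0$. Since the Killing form is non-degenerate on $\mathfrak{g}_{\alpha_i}\otimes \mathfrak{g}_{-\alpha_i}$, we have $(x_i,y_i)\neq 0$, so $t^{\,e_j+b}K_j.1 = 0$. As $b$ ranges over $\Z^n$, the shifted argument $e_j+b$ covers all of $\Z^n$; in particular $t^a K_j.1 = 0$ for every $0\neq a\in \Z^n$ and every $j\in \overline{1,n}$.

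The computation is almost entirely bookkeeping, so there is no real obstacle; the only delicate point is bookkeeping the automorphisms $\sigma_i$ and $\tau^a$ consistently when commuting the scalar factors $\lambda^{e_j}, \lambda^b$ past the module action — the key simplification being that $x_i.1$ and $y_i.1$ lie in $\mathcal{U}(\mathfrak{h})$ (by Lemma \ref{lemm4.7}) and are therefore invariant under all $\tau_k$. For the $C_l$ case the same argument goes through verbatim since Lemma \ref{lemm4.8}(3) and the toroidal bracket (\ref{tla}) are type-independent.
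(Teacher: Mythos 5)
Your proposal is correct and follows essentially the same route as the paper: apply the bracket $[x_i(e_j),y_i(a-e_j)]$ (with first degree $e_j$ so only $K_j$ survives in the cocycle) to $1$, evaluate both products via Lemma \ref{eva} and Lemma \ref{lemm4.8}(3) using that $x_i.1,\,y_i.1\in\mathcal{U}(\mathfrak{h})$ are $\tau$-invariant, and observe the difference collapses to $\lambda^a H_i.1$, forcing $t^aK_j.1=0$. Your explicit remark that $(x_i,y_i)\neq 0$ is a small point the paper glosses over, but otherwise the two arguments coincide.
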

 \begin{proof}
 We have $t^aK_j.1=[x_i(e_j),y_i(a-e_j)].1-H_i(a).1=x_i(e_j).y_i(a-e_j).1-y_i(a-e_j).x_i(e_j).1-\lambda^aH_i.1=\lambda^{(a-e_j)}x_i(e_j)(y_i.1)-\lambda_jy_i(a-e_j).(x_i.1)-\lambda^aH_i=\lambda^{(a-e_j)} \sigma_i\tau_j(y_i.1)(x_i(e_j).1)-\lambda_j\sigma_i^{-1}\tau^{(a-e_j)}(x_i.1)(y_i(a-e_j).1)-\lambda^a H_i=\lambda^a\sigma_i(y_i.1)(x_i.1)-\lambda^a\sigma_i^{-1}(x_i.1)(y_i.1)-\lambda^aH_i=0$.   
 \end{proof}            

  The proof of Theorem \ref{main1} will follow from, Lemma \ref{eva}, Lemma \ref{lemm4.6}, Lemma \ref{lemm4.7}, Lemma \ref{lemm4.8} and Lemma \ref{lemm4.9}.          
             
\section{Cartan free modules over full toroidal Lie algebras}\label{FTA}
  
\subsection{}
In this section, we will consider the categories $\mathcal{M}(\mathcal{L}, \widetilde{\mathfrak{h}}_n)$. First, we construct a class of modules in this category. Suppose $M_{\mathfrak{g}}\in \mathcal{M}(\mathfrak{g},\mathfrak{h}), \; \Lambda\in (\C^*)^n$ and $a\in \C$. For any $g\in \mathcal{U}(\widetilde{\mathfrak{h}}_n)$ and $r\in\Z^n$, we define the following action:
\begin{equation}\label{ms1}
\begin{cases}
    t^rK_j. \tilde{g}=0, \\
    h(a).  \tilde{g}=\mathbf{\lambda}^ah\tau^a( \tilde{g})\\
    x_i(a).  \tilde{g}= \mathbf{\lambda}^a\sigma_i\tau^a(\tilde{g})(x_i.1)\\
     y_i(a).  \tilde{g}= \mathbf{\lambda}^a\sigma_i^{-1}\tau^a(\tilde{g})(y_i.1)\\
     t^rd_i.\tilde{g}=\lambda^r \tau^r(\tilde{g})(d_i-r_i(a+1)).
\end{cases}
\end{equation}
We see that with the above actions $\mathcal{U}(\widetilde{\mathfrak{h}}_n)$ becomes $\mathcal{L}$-module where $\widetilde{\mathfrak{h}}_n$ acts freely on $\mathcal{U}(\widetilde{\mathfrak{h}}_n)$. We denote this module by $\widehat{M}_{\mathcal{L}}(\Lambda,a,M_{\mathfrak{g}})$.

\begin{prop}
    The $\mathcal{L}$-module $\widehat{M}_{\mathcal{L}}(\Lambda,a,M_{\mathfrak{g}})$ is simple if and only if the corresponding $\mathfrak{g}$-module $M_{\mathfrak{g}}$ is simple.
\end{prop}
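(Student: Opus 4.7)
The plan is to mimic the proof of Proposition \ref{prop charc}, since the $\mathcal{L}$-module structure extends the $\widetilde{\mathfrak{g}}_n$-module structure from Section \ref{TA} only by the new Witt action $t^r d_i.\tilde{g}=\lambda^r\tau^r(\tilde{g})(d_i-r_i(a+1))$, and this new action preserves the tensor decomposition $\widehat{M}_{\mathcal{L}}(\Lambda,a,M_{\mathfrak{g}})\cong M_{\mathfrak{g}}\otimes \mathcal{P}_D$ as a vector space in a clean way: it acts by $\tau^r$ and multiplication by $(d_i-r_i(a+1))$ on the $\mathcal{P}_D$-factor, while leaving the $M_{\mathfrak{g}}$-factor untouched.

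For the forward direction, suppose $M_{\mathfrak{g}}$ has a nonzero proper $\mathfrak{g}$-submodule $N$. First I would verify that $N\otimes \mathcal{P}_D$ is an $\mathcal{L}$-submodule of $\widehat{M}_{\mathcal{L}}(\Lambda,a,M_{\mathfrak{g}})$. The actions of $t^rK_j$, $h(r)$, $x_i(r)$, $y_i(r)$ preserve $N\otimes \mathcal{P}_D$ by exactly the argument in Proposition \ref{prop charc} (since $\sigma_i$, multiplication by $x_i.1$, $y_i.1$ all live on the $M_{\mathfrak{g}}$-factor and give precisely the $\mathfrak{g}$-action, while $\tau^r$ and $d_j$ act inside $\mathcal{P}_D$). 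For the new generator, $t^rd_i.(n\otimes p)=\lambda^r\,n\otimes \bigl((d_i-r_i(a+1))\tau^r(p)\bigr)\in N\otimes \mathcal{P}_D$, so invariance holds.

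For the converse, suppose $M_{\mathfrak{g}}$ is simple and let $W\subseteq \widehat{M}_{\mathcal{L}}$ be a nonzero submodule. I would prove the following claim: there exists $w\in W$ with $\deg_{d_j}(w)=0$ for every $j\in \overline{1,n}$. Pick $w_1\in W$ nonzero of minimal $d_1$-degree; then for any $h\in \mathfrak{h}$,
\[
(h(e_1)-\lambda_1 h).w_1=\lambda_1\, h\,(\tau_1-\mathrm{Id})(w_1)\in W,
\]
and Lemma \ref{pa} forces $\deg_{d_1}\bigl((\tau_1-\mathrm{Id})(w_1)\bigr)<\deg_{d_1}(w_1)$, so minimality gives $\deg_{d_1}(w_1)=0$. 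Iterating this argument in the variables $d_2,\dots,d_n$ (choosing at each step a nonzero element of minimal $d_j$-degree among those already reduced in $d_1,\dots,d_{j-1}$) yields the desired $w$. Such a $w$ lies in $M_{\mathfrak{g}}=\mathcal{U}(\mathfrak{h})$, and simplicity of $M_{\mathfrak{g}}$ gives $M_{\mathfrak{g}}=\mathcal{U}(\mathfrak{g}).w\subseteq W$. Finally, since each $d_j\in \widetilde{\mathfrak{h}}_n$ acts on $\mathcal{U}(\widetilde{\mathfrak{h}}_n)$ by left multiplication, we obtain $\mathcal{P}_D\cdot M_{\mathfrak{g}}\subseteq W$, hence $W=\widehat{M}_{\mathcal{L}}$.

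I expect no real obstacle: the argument is essentially a copy of Proposition \ref{prop charc}. The only point that deserves care is the forward direction, where one must check that the new bracket $t^rd_i$ does not mix the $M_{\mathfrak{g}}$- and $\mathcal{P}_D$-factors; the explicit formula shows it does not, so $N\otimes \mathcal{P}_D$ remains $\mathcal{L}$-invariant. The reverse direction goes through verbatim because the degree-reduction trick uses only the elements $h(e_j)$ and the multiplicative action of $d_j$, which live in the toroidal subalgebra already handled in Section \ref{TA}.
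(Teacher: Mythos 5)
Your proposal is correct and matches the paper's approach exactly: the paper proves this proposition simply by noting that the argument of Proposition \ref{prop charc} carries over verbatim, which is precisely what you spell out (including the check that the extra action of $t^rd_i$ preserves $N\otimes\mathcal{P}_D$ and the degree-reduction in the $d_j$'s via Lemma \ref{pa}).
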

\begin{proof}
    The proof is same as Proposition \ref{prop charc}.
\end{proof}
\begin{coro}\label{coro1}
 \begin{enumerate}
    \item  The $ \mathcal{L}(A_l)$-module $\widehat{M}_{\tau(A_l)}(\mathbf{\lambda},a,M(\mathbf{b},c, S))$ is simple if and only if $(l+1)c\notin \Z_+$ or $1\leq |S|\leq l.$
     \item  $ \mathcal{L}(C_l)$-module $\widehat{M}_{\tau(C_l)}(\mathbf{\lambda},a,M_{\mathfrak{g}(C_l)})$ is simple for any $\mathbf{\lambda}\in (\C^*)^n$ and $M_{\mathfrak{g}(C_l)}\in \mathcal{M}(\mathfrak{g}(C_l),\mathfrak{h})$.
\end{enumerate}    
\end{coro}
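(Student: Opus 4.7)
The plan is to deduce Corollary \ref{coro1} immediately from the preceding Proposition (simplicity of $\widehat{M}_{\mathcal{L}}(\Lambda,a,M_{\mathfrak{g}})$ is equivalent to simplicity of $M_{\mathfrak{g}}$) combined with Lemma \ref{simplecondition} (which records the simplicity criterion for the finite-dimensional rank-one Cartan-free $\mathfrak{g}$-modules of type $A_l$ and $C_l$). So the corollary is essentially a chase through two references.

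For part (1), I would argue as follows. By the preceding Proposition, $\widehat{M}_{\mathcal{L}(A_l)}(\mathbf{\lambda},a,M(\mathbf{b},c,S))$ is simple as an $\mathcal{L}(A_l)$-module if and only if $M(\mathbf{b},c,S)$ is simple as a $\mathfrak{g}(A_l)$-module. Lemma \ref{simplecondition}(1) characterizes the latter by the conditions $(l+1)c \notin \Z_+$ or $S \neq \emptyset$ and $S \neq \overline{1,l+1}$, which is precisely the statement $(l+1)c \notin \Z_{+}$ or $1 \leq |S| \leq l$. Substituting this back yields the claim.

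For part (2), the argument is even quicker: again by the preceding Proposition, simplicity of $\widehat{M}_{\mathcal{L}(C_l)}(\mathbf{\lambda},a,M_{\mathfrak{g}(C_l)})$ reduces to simplicity of $M_{\mathfrak{g}(C_l)} \in \mathcal{M}(\mathfrak{g}(C_l),\mathfrak{h})$, and Lemma \ref{simplecondition}(2) asserts that every such module is automatically simple. Hence $\widehat{M}_{\mathcal{L}(C_l)}(\mathbf{\lambda},a,M_{\mathfrak{g}(C_l)})$ is simple for every choice of $\mathbf{\lambda} \in (\C^*)^n$ and every $M_{\mathfrak{g}(C_l)} \in \mathcal{M}(\mathfrak{g}(C_l),\mathfrak{h})$.

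There is no real obstacle here, since the genuine work was already done in the proof of the preceding Proposition (which in turn mirrors the argument of Proposition \ref{prop charc} — the action of the Witt part $t^r d_i$ preserves degree in $d_j$ in a compatible way, so the same degree-reduction trick identifying a bottom element in $M_{\mathfrak{g}} \otimes \mathcal{P}_D$ goes through and reduces simplicity to simplicity of $M_{\mathfrak{g}}$). The only mild point worth checking is that the underlying vector-space identification $\widehat{M}_{\mathcal{L}}(\Lambda,a,M_{\mathfrak{g}}) \cong M_{\mathfrak{g}} \otimes \mathcal{P}_D$ used in that reduction is unchanged by the additional $\mathrm{Der}(A)$-action defined in (\ref{ms1}); once that is noted, the corollary is purely a combination of the two cited results.
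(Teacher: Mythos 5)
Your proposal is correct and follows exactly the paper's (implicit) argument: the corollary is obtained by combining the preceding Proposition, which reduces simplicity of $\widehat{M}_{\mathcal{L}}(\Lambda,a,M_{\mathfrak{g}})$ to simplicity of $M_{\mathfrak{g}}$, with the simplicity criteria of Lemma \ref{simplecondition}, just as Corollary \ref{coro} was deduced in the toroidal case. Nothing further is needed.
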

Suppose $M\in \mathcal{M}(\mathcal{L}, \widetilde{\mathfrak{h}}_n)$. Note that in order to understand the module structure on $M$, we need first to understand the Cartan free modules of extension of Witt algebras by finite-dimensional abelian Lie algebras.\\
Let $\mathcal{W}_n=Der(A)$ be the Witt algebra. In \cite{TZ1}, the authors defined a class of $\mathcal{W}_n$ modules as follows. For $a\in \C$ and $\Lambda_n=(\lambda_1,\lambda_2, \cdots, \lambda_n)\in (\C^*)^n$, denote $\Omega(\Lambda_n,a)=\C[d_1,d_2,\cdots, d_n]$ the polynomial algebra over $\C$ in commuting indeterminates $d_1, d_2, \cdots d_n$. The action of $\mathcal{W}_n$ on $\Omega(\Lambda_n,a)$ is defined by 
\begin{equation}\label{ma}
    t^kd_i.f(d_1,\cdots,d_n)=\Lambda_n^k(d_i-k_i(a+1))f(d_1-k_1,\cdots,d_n-k_n),
\end{equation}
 where $k=(k_1,k_2,\cdots, k_n)\in \Z^n, f\in \C[d_1,d_2,\cdots, d_n], \Lambda_n^k=\lambda_1^{k_1}\lambda_2^{k_2}\cdots \lambda_n^{k_n}, i=1,2,\cdots ,n$.
Now we recall a result from \cite{TZ}.
\begin{theo}{(Theorem 9, \cite{TZ})}
  Let $M\in \mathcal{M}(\mathcal{W}_n,D)$, where $D$ is the $\C$-linear span of $\{d_1,d_2,\dots d_n\}$. Then $M\cong \Omega(\Lambda_n,a)$ for some $\Lambda_n\in (\C^*)^n$ and some $a\in \C$.
\end{theo}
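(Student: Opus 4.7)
The plan is to determine explicitly how an arbitrary generator $t^r d_i$ must act on the identification $M = \C[d_1,\dots,d_n] = \mathcal{U}(D)$. First I would encode the commutation $[d_k, t^r d_i] = r_k t^r d_i$ operatorially. Writing $R_{d_k}$ for left multiplication by $d_k$ and $T_{r,i}$ for the action of $t^r d_i$ on $M$, this relation becomes $R_{d_k} T_{r,i} = T_{r,i}(R_{d_k} + r_k)$. Letting $\tau^r$ denote the shift $f(d)\mapsto f(d-r)$, which satisfies $\tau^{-r} R_{d_k} \tau^r = R_{d_k} + r_k$, one sees that $T_{r,i}\,\tau^{-r}$ commutes with $R_{d_k}$ for every $k$, hence is itself multiplication by a unique polynomial $\phi_{r,i}\in\C[d_1,\dots,d_n]$. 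Therefore
\[
(t^r d_i)\cdot f(d)=\phi_{r,i}(d)\,f(d-r),\qquad \phi_{0,i}(d)=d_i,
\]
and the classification reduces to determining the family $\{\phi_{r,i}\}$.

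Second, I would translate the full Witt bracket $[t^r d_i, t^s d_j] = s_i\, t^{r+s} d_j - r_j\, t^{r+s} d_i$ into the functional equation
\[
\phi_{r,i}(d)\,\phi_{s,j}(d-r)-\phi_{s,j}(d)\,\phi_{r,i}(d-s)=s_i\,\phi_{r+s,j}(d)-r_j\,\phi_{r+s,i}(d).
\]
The core of the argument is a degree analysis of this identity in the spirit of Lemma \ref{pa}. Applied with carefully chosen $(r,s,i,j)$—for example $s=-r$ with $j=i$, the off-diagonal choice $i\neq j$ with $s=e_k$, and iterated compositions—one forces $\deg_{d_i}\phi_{r,i}\le 1$ and $\deg_{d_k}\phi_{r,i}\le 0$ for $k\neq i$. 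Thus each $\phi_{r,i}$ is affine in $d_i$ and constant in the remaining indeterminates, so $\phi_{r,i}(d)=\mu_{r,i}\,d_i+\nu_{r,i}$ with $\mu_{r,i},\nu_{r,i}\in\C$.

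Third, substituting this affine ansatz back yields scalar identities in $\mu_{r,i}$ and $\nu_{r,i}$. Mimicking the multiplicativity argument of Lemma \ref{lemm4.8}, the relation $\mu_{r+s,i}=\mu_{r,i}\mu_{s,i}$—together with the independence of $i$ forced by the off-diagonal brackets—produces scalars $\lambda_1,\dots,\lambda_n\in\C^*$ such that $\mu_{r,i}=\Lambda_n^{r}$ for $\Lambda_n=(\lambda_1,\dots,\lambda_n)$. The constants $\nu_{r,i}$ are then linked by the functional equation and collapse to a single free parameter $a\in\C$, giving $\nu_{r,i}=-\Lambda_n^{r}\,r_i(a+1)$. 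This matches exactly the formula \eqref{ma} defining $\Omega(\Lambda_n,a)$, so $M\cong \Omega(\Lambda_n,a)$.

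The step I expect to be the main obstacle is the degree reduction in the second paragraph. Eliminating dependence on $d_k$ for $k\neq i$ is routine once the cross-bracket is unpacked, but bounding $\deg_{d_i}\phi_{r,i}$ by $1$ uniformly in $r$ requires iterating Lemma \ref{pa}—applied to the self-commutator $[t^r d_i, t^{-r} d_i] = -2r_i d_i$ and its higher analogues—so that arbitrarily high-degree components of $\phi_{r,i}$ are forced to cancel. A secondary technical nuisance is handling lattice vectors $r$ with some vanishing coordinate, for which several specializations of the functional equation degenerate and must be recovered by an inductive base-change argument on $\Z^n$ starting from the coordinate generators $r=e_j$.
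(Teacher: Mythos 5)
This statement is not proved in the paper at all: it is imported verbatim from \cite{TZ} (Theorem 9 there), so there is no in-paper argument to measure you against; I can only assess your plan on its own terms. Your first step (the commutant argument showing that $t^rd_i$ acts as $f\mapsto \phi_{r,i}(d)f(d-r)$ with $\phi_{0,i}=d_i$) and your third step (solving the resulting scalar system: multiplicativity of the leading coefficients giving $\mu_{r,i}=\Lambda_n^r$, and the constants collapsing to $\nu_{r,i}=-\Lambda_n^r r_i(a+1)$ via the relation $r_j(\beta_{r+s,i}-\beta_{r,i})=s_i(\beta_{r+s,j}-\beta_{s,j})$) are correct and routine to complete, and the functional equation you write down is the right one.

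The genuine gap is exactly the step you flag: forcing $\phi_{r,i}$ to be affine in $d_i$ and independent of $d_k$ ($k\neq i$). The mechanism you propose does not deliver this as stated. In the identity coming from $[t^rd_i,t^{-r}d_i]=-2r_id_i$, the top homogeneous parts of $\phi_{r,i}\,\phi_{-r,i}(\cdot-r)$ and $\phi_{-r,i}\,\phi_{r,i}(\cdot+r)$ cancel identically, and the next-order term is $-r\cdot\nabla(PQ)$, where $P,Q$ are the leading forms; in several variables this term can vanish identically (e.g.\ when $P,Q$ do not involve the coordinate along $r$), so no degree bound follows from this relation alone. Moreover Lemma \ref{pa} is about the degree drop of $(w_i-\mathrm{Id})^k$ applied to a \emph{fixed} polynomial, whereas here both sides of the functional equation involve unknown polynomials, and the right-hand side contains $\phi_{r+s,i},\phi_{r+s,j}$ whose degrees are not bounded a priori — so the degree analysis is a coupled bootstrap over all of $\Z^n$, not an iteration of Lemma \ref{pa}. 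This bootstrap (using mixed-direction brackets such as $[t^rd_i,t^sd_j]$ with $r_j=s_i=0$, divisibility in the UFD $\C[d_1,\dots,d_n]$, and an induction handling lattice vectors with vanishing coordinates) is the actual content of Theorem 9 of \cite{TZ}, and it is precisely the part your proposal asserts rather than proves. Until that step is carried out, the argument is a correct reduction of the theorem to its hardest point, not a proof.
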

  Let $\widetilde{\mathcal{W}_n}=\mathcal{W}_n\oplus Z$ and $\widetilde{D}=D\oplus Z$, where  $Z$ is any finite-dimensional abelian Lie algebra. Now our aim is to understand the category $\mathcal{M}(\widetilde{\mathcal{W}_n}, \widetilde{D})$. Now we will follow the idea of Lemma 13 from \cite{CTZ2}. \\
 Let $M\in \mathcal{M}(\widetilde{\mathcal{W}_n}, \widetilde{D})$. By assumption $[\mathcal{W}_n, Z]=0$, so it is easy to see that $\widetilde{\mathcal{W}_n}$ module structure on $M$ is determined by the actions of $t^rd_i$ on $1$, where $r\in \Z^n$ and $1\leq i\leq n$.\\
 Define a new Lie algebra $\widehat{\mathcal{W}_n}:=\mathcal{U}(Z)\otimes_{\C} \mathcal{W}_n\subset \mathcal{U}(\widetilde{\mathcal{W}_n})$ over the PID $\mathcal{U}(Z)$. Then $M=\mathcal{U}(\widetilde{D})$ becomes a $\widehat{\mathcal{W}_n}$-module over $\mathcal{U}(Z)$ where the action of $\widehat{\mathcal{W}_n}$ on $\mathcal{U}(\widetilde{D})$ inherits from the action of $\widetilde{\mathcal{W}_n}$ on $M$.\\
 Now let us extend the base field. Suppose $\C(Z)$ be the field of fraction of $\mathcal{U}(Z)$ and $\overline{\C(Z)}$ be the algebraic closed extension field of $\C(Z)$. Let 
 \begin{equation*}
     G=\overline{\C(Z)}\otimes_{\mathcal{U}(Z)}\widehat{\mathcal{W}_n}\cong \overline{\C(Z)}\otimes_{\C} \mathcal{W}_n
 \end{equation*}
with the Cartan subalgebra $\mathfrak{D}\cong \overline{\C(Z)}\otimes_{\C} D$. Note that $\mathcal{U}(\widetilde{D})\cong \mathcal{U}(Z)\otimes_{\C}\mathcal{U}(D)$, we have vector space isomorphism over  $\overline{\C(Z)}$ as follows:
\begin{equation*}
    \overline{\C(Z)}\otimes_{\mathcal{U}(Z)}\mathcal{U}(\widetilde{D})\cong \overline{\C(Z)}\otimes_{\C}\mathcal{U}(D)\cong \overline{\C(Z)}[\mathfrak{D}]=\mathcal{U}(\mathfrak{D}),
\end{equation*}
where the last term is the universal enveloping algebra of $\mathfrak{D}$ over $ \overline{\C(Z)}$. So we can identify $\mathcal{U}(\mathfrak{D})$ with with $ \overline{\C(Z)}\otimes_{\mathcal{U}(Z)}\mathcal{U}(\widetilde{D})$. Now it is natural to extend the module action of $\widehat{\mathcal{W}_n}$ on $\mathcal{U}(\widetilde{D})$ to the module action of $G$ on  $\overline{\C(Z)}\otimes_{\mathcal{U}(Z)}\mathcal{U}(\widetilde{D})$ by
\begin{equation*}
    \phi_1\otimes \phi_2\otimes x \circ \psi_1\otimes \psi_2 g=\phi_1\psi_1\otimes(\phi_2\otimes x.\psi_2g),
\end{equation*}
where $\phi_1,\psi_1\in \overline{\C(Z)}, \; \phi_2, \psi_2 \in \mathcal{U}(Z),\; x\in \mathcal{W}_n,\; g\in \mathcal{U}(D)$.\\
We see that the action of $\mathfrak{D}$ on $\overline{\C(Z)}\otimes_{\mathcal{U}(Z)}\mathcal{U}(\widetilde{D})$ is just multiplication, so the $G$-module $\overline{\C(Z)}\otimes_{\mathcal{U}(Z)}\mathcal{U}(\widetilde{D})$ is inside $\mathcal{M}(G, \mathfrak{D})$, where all the modules are over $\overline{\C(Z)}$. \\
Now the method used in \cite{TZ} to prove Theorem 9 is valid for the $G$-module $\overline{\C(Z)}\otimes_{\mathcal{U}(Z)}\mathcal{U}(\widetilde{D})$. So $\overline{\C(Z)}\otimes_{\mathcal{U}(Z)}\mathcal{U}(\widetilde{D})\cong \Omega (\Lambda_n, a)$ for some $\Lambda_n\in ((\overline{\C(Z)})^*)^n$ and some $a\in \overline{\C(Z)}$.\\
Note that the action of $\mathcal{W}_n\subset G$ on $\mathcal{U}(\widetilde{D})$ coincide with the action of $\mathcal{W}_n$ on the $\widetilde{\mathcal{W}_n}$-module $M=\mathcal{U}(\widetilde{D})$, we must have $t^rd_i\circ 1\in \mathcal{U}(\widetilde{D})$ for $r\in \Z^n,\; 1\leq i\leq n$, which forces that $a, \lambda_1, \lambda_2,\dots, \lambda_n\in \mathcal{U}(Z)$. Again from the action \ref{ma}, we see that $\lambda_1^{-1}, \lambda_2^{-1}\dots, \lambda_n^{-1}\in \mathcal{U}(Z)$. So $\Lambda_n\in (\C^*)^n$ and $a\in \mathcal{U}(Z)$ and we have the following Proposition.
\begin{prop}\label{der}
    Let $M\in \mathcal{M}(\widetilde{\mathcal{W}_n}, \widetilde{D})$. Then $M\cong \Omega(\Lambda_n,a)$ for some $\Lambda_n\in (\C^*)^n$ and $a\in \mathcal{U}(Z)$.
\end{prop}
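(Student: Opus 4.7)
The plan is to carry out the extension-and-descent strategy sketched in the paragraphs preceding the proposition. Since $M = \mathcal{U}(\widetilde{D})$ is a rank-one free $\mathcal{U}(\widetilde{D})$-module and $[\mathcal{W}_n, Z] = 0$, the $\widetilde{\mathcal{W}_n}$-module structure on $M$ is completely determined by the vectors $t^r d_i \cdot 1 \in \mathcal{U}(\widetilde{D})$ for $r \in \Z^n$ and $1 \le i \le n$. My first step is to enlarge the scalars from $\mathcal{U}(Z)$ to the algebraic closure $\overline{\C(Z)}$ of its field of fractions, producing the Lie algebra $G = \overline{\C(Z)} \otimes_\C \mathcal{W}_n$ with Cartan subalgebra $\mathfrak{D} = \overline{\C(Z)} \otimes_\C D$ and the extended module $\overline{\C(Z)} \otimes_{\mathcal{U}(Z)} \mathcal{U}(\widetilde{D}) \cong \mathcal{U}(\mathfrak{D})$, on which $\mathfrak{D}$ acts by left multiplication. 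This puts the extended module in $\mathcal{M}(G, \mathfrak{D})$ over $\overline{\C(Z)}$.

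Second, I would invoke Theorem 9 of \cite{TZ} with the base field $\overline{\C(Z)}$. The argument given there uses only the field axioms together with algebraic closedness and characteristic zero, so it applies verbatim to $G$ and yields an isomorphism of $G$-modules $\overline{\C(Z)} \otimes_{\mathcal{U}(Z)} \mathcal{U}(\widetilde{D}) \cong \Omega(\Lambda_n, a)$ for some $\Lambda_n = (\lambda_1, \dots, \lambda_n) \in ((\overline{\C(Z)})^*)^n$ and $a \in \overline{\C(Z)}$.

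Third comes the descent, which is the heart of the proof. Because the original $\mathcal{W}_n$-action preserves $\mathcal{U}(\widetilde{D})$, the formula $t^r d_i \cdot 1 = \Lambda_n^r\bigl(d_i - r_i(a+1)\bigr)$ forces $\Lambda_n^r \in \mathcal{U}(Z)$ for every $r \in \Z^n$. Taking $r = \pm e_i$ yields $\lambda_i, \lambda_i^{-1} \in \mathcal{U}(Z)$, and since $\mathcal{U}(Z) = S(Z)$ is a polynomial algebra whose group of units is exactly $\C^*$, I conclude $\lambda_i \in \C^*$ for each $i$. Reading off the constant-in-$d_i$ term of the same formula then pins down $a \in \mathcal{U}(Z)$, completing the identification $M \cong \Omega(\Lambda_n, a)$.

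The main obstacle is the descent step: one must be careful to check that the intertwiner produced over the big field $\overline{\C(Z)}$ descends to a $\mathcal{U}(\widetilde{D})$-module isomorphism over $\mathcal{U}(Z)$. This is manageable because both $M$ and $\Omega(\Lambda_n, a)$ are free of rank one as $\mathcal{U}(\widetilde{D})$-modules and the $G$-equivariant isomorphism is determined by the image of $1$; once the parameters $\lambda_i$ and $a$ are shown to lie in the correct rings, matching the generators shows that the action of $\widetilde{\mathcal{W}_n}$ on $M$ really does coincide with the $\Omega(\Lambda_n, a)$-action given in \eqref{ma}.
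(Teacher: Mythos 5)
Your proposal is correct and follows essentially the same route as the paper: extend scalars to $\overline{\C(Z)}$, run the Tan--Zhao Theorem 9 argument for $G=\overline{\C(Z)}\otimes_\C\mathcal{W}_n$ acting on $\mathcal{U}(\mathfrak{D})$, and then descend by observing that $t^rd_i\cdot 1\in\mathcal{U}(\widetilde{D})$ forces $\lambda_i,\lambda_i^{-1}\in\mathcal{U}(Z)$, hence $\lambda_i\in\C^*$, and $a\in\mathcal{U}(Z)$. The only cosmetic difference is that the paper interposes the $\mathcal{U}(Z)$-Lie algebra $\widehat{\mathcal{W}_n}=\mathcal{U}(Z)\otimes_\C\mathcal{W}_n$ before passing to $\overline{\C(Z)}$, which does not change the substance of the argument.
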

Suppose $M\in \mathcal{M}(\mathcal{L}, \widetilde{\mathfrak{h}}_n)$. From the Lie brackets of Full toroidal Lie algebras, we see that for any $\tilde{g}\in \mathcal{U}(\widetilde{\mathfrak{h}}_n)$: 
 \begin{equation*}
    \begin{cases}
        t^aK_j. (\tilde{g})=\tau^a (\tilde{g})(t^aK_j.1),\\
        h(a). (\tilde{g})=\tau^a (\tilde{g})(h(a).1),\\
        x_i(a).(\tilde{g})=(\sigma_i \tau^a(\tilde{g})) (x_i(a).1),\\
        y_i(a).(\tilde{g})=(\sigma_i^{-1} \tau^a(\tilde{g})) (y_i(a).1),\\
       D(u,r).(\tilde{g})=\tau^{r}(\tilde{g})D(u,r).1
    \end{cases}
     \end{equation*}
  where $1\leq i\leq l$, $j\in \overline{1,n}$, $u\in \C^n$ and $r\in \Z^n$.\\

 Note that $ {Res}^{\mathcal{L}}_{\widetilde{\mathfrak{g}_n}}M\in \mathcal{M}(\mathcal{L}, \widetilde{\mathfrak{h}}_n)$, there fore we have $t^aK_j.1=0$, where $1\leq j\leq n$. Now from Theorem \ref{main1}, we have $ {Res}^{\mathcal{L}}_{\widetilde{\mathfrak{g}_n}}M \cong \widetilde{M}_{\widetilde{\mathfrak{g}}_n}(\mathbf{\lambda},M_\mathfrak{g})$, for some $\mathbf{\lambda}\in (\C^*)^n, M_{\mathfrak{g}}\in \mathcal{M}(\mathfrak{g}, \mathfrak{h})$. \\
 
 Also $Res^{\mathcal{L}}_{\widetilde{\mathcal{W}_n}}M\in \mathcal{M}(\widetilde{\mathcal{W}_n},\widetilde{D})$, where $\widetilde{\mathcal{W}_n}=\mathcal{W}_n\oplus \mathfrak{h}_n$ and $\widetilde{D}=D\oplus \mathfrak{h}_n.$ So from Proposition \ref{der}, we see that $Res^{\mathcal{L}}_{\widetilde{\mathcal{W}_n}}M \cong \Omega (\gamma, a)$ for some $\gamma \in (\C^*)^n$ and $a\in \mathcal{U}(Z)$. \\
 {\bf Claim:} $a\in \C$.\\
We know that $[t^rd_j,x_i]=0$ for all $r\in \Z^n$, $1\leq j\leq n$ and $1\leq i\leq l$. Therefore we will have the following:
\begin{align*}
   0 &= t^rd_j.x_i.1-x_i.t^rd_j.1 \cr 
   &=\tau^r(x_i.1)t^rd_i.1-x_i.t^rd_j.1 \cr 
   & =\gamma^r(x_i.1)(d_j-r_j(a+1))-x_i.(t^rd_j.1) \cr
   &=\gamma^r\{(x_i.1)(d_j-r_j(a+1))-\sigma_i(d_j-r_j(a+1))(x_i.1)\}
    \end{align*}
   From the above equation, we see that if we choose $r\in \Z^n$ such that $r_j\neq 0$, we will have $a-\sigma_i(a)=0$ for all $1\leq i\leq l$. So $a\in \cap_{i=1}^l\mathcal{P}_i=\mathcal{P}_D$. We also know that $a\in \mathcal{U}( \mathfrak{h}_n)=\mathcal{P}_{H}$, therefore $a\in \C$. 
   Now consider the relation $[t^rd_j, x_i(s)].1=s_jx_i(r+s).1$, which will give us the following:
   \begin{align*}
       s_j\Lambda_n^{r+s}x_i.1 
       &=t^rd_j.x_i(s).1-x_i(s).t^rd_j.1 \cr
       &=\Lambda_n^st^rd_j.x_i.1-x_i(s).\gamma^r(d_j-s_j(a+1)) \cr
       &=\Lambda_n^s\tau^{r}\{(x_i.1)\}t^rd_j.1-\gamma^r\sigma_i\tau^s\{(d_j-s_j(a+1))\}x_i(s).1 \cr
       &=\Lambda_n^s(x_i.1)\gamma^r(d_j-r_j(a+1))-\Lambda_n^s\gamma^r(d_j-s_j-s_j(a+1)) \
       =s_j\Lambda_n^s\gamma^r(x_i.1).
 \end{align*}
From the above equation, if we choose $s\in \Z^n$ such that $s_j\neq 0$, then we see that $\Lambda_n^r=\gamma^r$ for all $r\in \Z^n$.  Therefore we will have $\Lambda_n=\gamma$.
\begin{theo}
    Let $M\in \mathcal{M}(\mathcal{L}, \widetilde{\mathfrak{h}}_n)$, then $M\cong \widehat{M}_{\mathcal{L}}(\Lambda,a,M_{\mathfrak{g}})$, for some $\Lambda \in (\C^*)^n,\; a\in \C,\; M_{\mathfrak{g}}\in \mathcal{M}(\mathfrak{g},\mathfrak{h})$.
\end{theo}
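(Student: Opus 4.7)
The plan is to assemble the results already established in Section~\ref{FTA} into a single identification of the $\mathcal{L}$-module structure on $M$. First, I would invoke the restriction $\mathrm{Res}^{\mathcal{L}}_{\widetilde{\mathfrak{g}}_n} M \in \mathcal{M}(\widetilde{\mathfrak{g}}_n, \widetilde{\mathfrak{h}}_n)$ together with Theorem~\ref{main1}: this produces a pair $(\mathbf{\lambda}, M_\mathfrak{g}) \in (\C^*)^n \times \mathcal{M}(\mathfrak{g},\mathfrak{h})$ such that the actions of $t^a K_j$, $h(a)$, $x_i(a)$, $y_i(a)$ on $1$ coincide precisely with those of $\widetilde{M}_{\widetilde{\mathfrak{g}}_n}(\mathbf{\lambda}, M_\mathfrak{g})$. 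In particular $t^a K_j.1 = 0$ is already secured by Lemma~\ref{lemm4.9}, so all the formulas in~(\ref{ms1}) except the one for $t^r d_i$ are fixed.

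Next I would use $\mathrm{Res}^{\mathcal{L}}_{\widetilde{\mathcal{W}_n}} M \in \mathcal{M}(\widetilde{\mathcal{W}_n}, \widetilde{D})$ (with $Z = \mathfrak{h}$) and Proposition~\ref{der} to produce $\gamma \in (\C^*)^n$ and $a \in \mathcal{U}(Z)$ such that
\[
t^r d_i.\tilde g \;=\; \gamma^r\,\tau^r(\tilde g)\bigl(d_i - r_i(a+1)\bigr).
\]
The two compatibility computations spelled out just before the theorem then come into play: the relation $[t^r d_j, x_i] = 0$ applied to $1$ yields $a - \sigma_i(a) = 0$ for every $i$, hence $a \in \bigcap_i \mathcal{P}_i \cap \mathcal{U}(\mathfrak{h}_n) = \mathcal{P}_D \cap \mathcal{P}_H = \C$; while the relation $[t^r d_j, x_i(s)].1 = s_j x_i(r+s).1$ forces $\gamma^r = \mathbf{\lambda}^r$ for all $r \in \Z^n$. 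Setting $\Lambda := \gamma = \mathbf{\lambda}$, these are exactly the matches needed to align the Witt-side and toroidal-side parameters.

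Finally, I would verify that the action on $1$ of every generator of $\mathcal{L}$ now coincides with the action defined by~(\ref{ms1}) for the triple $(\Lambda, a, M_\mathfrak{g})$. Since the general displayed formulas at the start of Section~\ref{FTA} show that the full $\mathcal{L}$-action on an arbitrary $\tilde g \in \mathcal{U}(\widetilde{\mathfrak{h}}_n)$ is determined by the actions on $1$ together with the automorphisms $\sigma_i, \tau^r$, this identification on $1$ immediately upgrades to a module isomorphism $M \cong \widehat{M}_{\mathcal{L}}(\Lambda, a, M_\mathfrak{g})$.

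The genuinely hard content was already done before the theorem was stated: descending the scalar $a$ from $\mathcal{U}(Z)$ to $\C$, and matching $\gamma$ with $\mathbf{\lambda}$ using the mixed bracket $[t^r d_j, x_i(s)]$. Given those, the proof itself is essentially a bookkeeping step that glues Theorem~\ref{main1} and Proposition~\ref{der} via the compatibility relations; no further technical obstacle is anticipated.
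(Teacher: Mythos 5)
Your proposal matches the paper's argument exactly: the paper's own proof is precisely the assembly you describe, namely restricting to $\widetilde{\mathfrak{g}}_n$ and invoking Theorem~\ref{main1}, restricting to $\widetilde{\mathcal{W}_n}$ (with $Z=\mathfrak{h}$) and invoking Proposition~\ref{der}, then using the brackets $[t^rd_j,x_i]=0$ and $[t^rd_j,x_i(s)]=s_jx_i(r+s)$ to force $a\in\C$ and $\gamma=\Lambda$, and finally identifying the actions on $1$ via the displayed evaluation formulas. No gaps; this is essentially the same route, written out more explicitly than the paper's ``follows from the above discussion.''
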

 \begin{proof}
     The proof follows from the above discussion.
 \end{proof}  
\begin{theo}
    $\widehat{M}_{\mathcal{L}}(\Lambda,a,M_{\mathfrak{g}}) \cong \widehat{M}_{\mathcal{L}}(\Lambda',a',M_{\mathfrak{g'}})$ if and only if $M_{\mathfrak{g}} \cong$ $ M_{\mathfrak{g'}}$ as $\mathfrak g$-module and $\Lambda=\Lambda'$, $a=a'.$
\end{theo}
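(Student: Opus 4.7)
The \emph{if} direction is essentially immediate: given a $\mathfrak{g}$-linear isomorphism $\psi:M_{\mathfrak{g}}\to M_{\mathfrak{g}'}$, the vector-space decomposition $\widehat{M}_{\mathcal{L}}(\Lambda,a,M_{\mathfrak{g}})\cong M_{\mathfrak{g}}\otimes\mathcal{P}_D$ lets me extend $\psi$ to $\psi\otimes \mathrm{id}$, and when $\Lambda=\Lambda'$ and $a=a'$ a direct check against (\ref{ms1}) shows this intertwines the full $\mathcal{L}$-action.

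For the \emph{only if} direction, let $\phi:\widehat{M}_{\mathcal{L}}(\Lambda,a,M_{\mathfrak{g}})\to \widehat{M}_{\mathcal{L}}(\Lambda',a',M_{\mathfrak{g}'})$ be an $\mathcal{L}$-module isomorphism. The plan is to pin down $\Lambda$, then $a$, then $M_{\mathfrak{g}}$ in that order by applying $\phi$ to three decoupled families of generators acting on $1$. First I would observe that $\phi$ is automatically $\mathcal{U}(\widetilde{\mathfrak{h}}_n)$-linear, since $\widetilde{\mathfrak{h}}_n\subset \mathcal{L}$ and the Cartan action on both sides is simply left multiplication. Hence $\phi(g)=g\,\phi(1)$ for every $g\in \mathcal{U}(\widetilde{\mathfrak{h}}_n)$, and bijectivity forces $\phi(1)$ to be a unit of the polynomial algebra $\mathcal{U}(\widetilde{\mathfrak{h}}_n)=\C[H_1,\dots,H_l,d_1,\dots,d_n]$, so $\phi(1)\in \C^{*}$. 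After rescaling I may assume $\phi(1)=1$, so $\phi$ is the identity map on the underlying space.

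Now I compare module actions one generator family at a time. Applying $\phi$ to $h(e_j).1$ with $h=H_1$, formula (\ref{ms1}) gives $\lambda_j H_1=\lambda_j' H_1$ for each $j$, whence $\Lambda=\Lambda'$. Using this, comparing $t^{e_j}d_j.1=\lambda_j(d_j-(a+1))$ with its target counterpart $\lambda_j(d_j-(a'+1))$ forces $a=a'$. Finally, evaluating $\phi$ on $x_i(0).1$, $y_i(0).1$ and $H_i(0).1$ yields $X.1_{M_{\mathfrak{g}}}=X.1_{M_{\mathfrak{g}'}}$ in $\mathcal{U}(\mathfrak{h})$ for every Chevalley generator $X$; since these values on $1$ completely determine the $\mathfrak{g}$-module structure on $\mathcal{U}(\mathfrak{h})$ (cf.\ Lemma \ref{fdssae}), the identity map is already a $\mathfrak{g}$-isomorphism $M_{\mathfrak{g}}\cong M_{\mathfrak{g}'}$.

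I do not anticipate a serious obstacle. The one mildly delicate point is the initial reduction $\phi(1)\in \C^{*}$, which depends on the fact that the units of the polynomial algebra $\mathcal{U}(\widetilde{\mathfrak{h}}_n)$ are exactly the nonzero scalars; everything after that reduction is a linear readout of the explicit formulas in (\ref{ms1}), and the three parameters are detected by three mutually decoupled subfamilies of $\mathcal{L}$, so no cross-interference can arise.
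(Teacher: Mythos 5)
Your proposal is correct and follows essentially the same route as the paper: show that $\mathcal{U}(\widetilde{\mathfrak{h}}_n)$-linearity forces $\phi(1)\in\C^{*}$ (the paper does this via $\phi(1)\phi^{-1}(1)=1$, you via the units of the polynomial algebra), then read off $\Lambda=\Lambda'$, $a=a'$ from the actions of $h(e_j)$ and $t^{e_j}d_j$ on $1$, and obtain $M_{\mathfrak{g}}\cong M_{\mathfrak{g}'}$ from the restriction to $\mathcal{U}(\mathfrak{h})$. The only differences are cosmetic (your normalization $\phi(1)=1$ and the order in which the parameters are identified).
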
 
\begin{proof}
   It is clear to see the sufficient part.  Let $\phi:\widehat{M}_{\mathcal{L}}(\Lambda,a,M_{\mathfrak{g}}) \to \widehat{M}_{\mathcal{L}}(\Lambda',a',M_{\mathfrak{g'}}) $ be the isomorphism with its inverse $\phi^{-1}$. Therefore we have the following relations
    \begin{align}
			\phi(\tilde g)=\tilde g\phi(1) \\
			\phi^{-1}(\tilde g)=\tilde g\phi^{-1}(1),
		\end{align}
  for all $\tilde g \in U(\tilde{\mathfrak h}_n).$ In particular we have $\phi^{-1}(\phi(1))=\phi(1)\phi^{-1}(1)=1$, which implies that $\phi(1) $ is a non-zero scalar. Now consider $\psi=\phi|_{U(\mathfrak h)}$ and note that $\psi(U(\mathfrak h)=U(\mathfrak h).$ Hence $\psi$ is an isomorphism between $M_{\mathfrak{g}}$ and $ M_{\mathfrak{g'}}$.\\
   Let $\Lambda=(\lambda_1,\dots , \lambda_n)$ and $\Lambda'=(\lambda_1', \dots, \lambda_n')  .$ Now consider the relation $\phi(h(e_i).\tilde g)=h(e_i).\phi(\tilde g)=h(e_i).\tilde g\phi(1)$. This implies that $\lambda_i=\lambda_i'.$ Also consider the relation $\phi(t_id_i.\tilde g)=t_id_i.\phi(\tilde g)=t_id_i.\tilde g\phi(1)$ and conclude that $a=a'$ using the fact that $\lambda_i=\lambda_i'.$
\end{proof}

\end{document}